\documentclass[11pt]{article}

\usepackage[dvipsnames]{xcolor}
\usepackage{amsthm}
\usepackage{amscd}
\usepackage{amsmath}
\usepackage{amssymb}
\usepackage{amsfonts}
\usepackage{bm}
\usepackage{bbm}
\usepackage{booktabs}
\usepackage{color, colortbl}
\usepackage{dsfont} % blackboard numerals
\usepackage{enumerate}
\usepackage{graphicx}
\usepackage{mathabx}
\usepackage{multirow}
\usepackage{mathrsfs}
\usepackage{mdframed}
\usepackage{mathbbol}
\usepackage{mathtools}
\usepackage{stmaryrd}

\usepackage{natbib}
\bibliographystyle{abbrvnat}

%\setcitestyle{authoryear,open={(},close={)}}

\usepackage{hyperref}

%\newcommand\myshade{85}
%\colorlet{mylinkcolor}{violet}
%\colorlet{mycitecolor}{YellowOrange}
%\colorlet{myurlcolor}{Aquamarine}
%\hypersetup{
  %linkcolor  = mylinkcolor!\myshade!black,
  %citecolor  = myurlcolor!\myshade!black,
  %urlcolor   = myurlcolor!\myshade!black,
  %colorlinks = true,
%}

% \usepackage[sort&compress,numbers]{natbib}

\usepackage{setspace}
\usepackage{url}

\usepackage{geometry}
\geometry{margin=1in}

%%%%%%%%%%%%local package
\usepackage[shortlabels]{enumitem}
\usepackage{caption}
\usepackage{subcaption}

\usepackage{tikz}
% \usepackage[active,tightpage]{preview}
%\PreviewEnvironment{tikzpicture}
%\setlength\PreviewBorder{5pt}

%%%%%%%%%%%%

\newcommand{\R}{\mathbb{R}}
\newcommand{\E}{\mathbb{E}}
\newcommand{\Prob}{\mathbb{P}}

\newcommand{\W}{\mathbf{W}}

\newcommand{\rW}{\overbar{W}}
\newcommand{\rX}{\overbar{X}}
\newcommand{\rZ}{\overbar{Z}}
\newcommand{\rY}{\overbar{Y}}

\newcommand{\rw}{\overbar{w}}

\newcommand{\fF}{\mathscr{F}}

\newcommand{\FF}{\mathbb{F}}
\newcommand{\nFF}{\tilde{\mathbb{F}}}
\newcommand{\fG}{\mathscr{G}}

\newcommand{\FG}{\mathbb{G}}

\newcommand{\dx}[1]{\ \text{\textup{d}} #1}
\newcommand{\indicator}[1]{\mathds{1}\!\left\{ #1 \right\}}
\newcommand{\indep}{\rotatebox[origin=c]{90}{$\models$}}

\newcommand{\overbar}[1]{\mkern 1.5mu\overline{\mkern-1.5mu#1\mkern-1.5mu}\mkern 1.5mu}

%%%%%%%%%%%%%%%%%%%%% local newcommand for this paper
\newcommand{\Qrob}{\mathbb{Q}}

\newcommand{\cadlag}{c\`adl\`ag }
\newcommand{\Ito}{It\^{o} }
\newcommand{\fH}{\mathscr{H}}

\newcommand{\FH}{\mathbb{H}}

\newcommand{\bbzero}{\bar{\boldsymbol{0}}}
\newcommand{\bbone}{\bar{\boldsymbol{1}}}
\newcommand{\bmu}{\boldsymbol{\mu}}
\newcommand{\tbmu}{\tilde{\boldsymbol{\mu}}}

%%%%%%%%%%%%%%%%%%%%%

%

\DeclarePairedDelimiter\floor{\lfloor}{\rfloor}

% Swap the definition of \abs* and \norm*, so that \abs
% and \norm resizes the size of the brackets, and the 
% starred version does not.
%\makeatletter
%\let\oldabs\abs
%\def\abs{\@ifstar{\oldabs}{\oldabs*}}
%%
%\let\oldnorm\norm
%\def\norm{\@ifstar{\oldnorm}{\oldnorm*}}
%\makeatother

%

%\newtheoremstyle{mytheoremstyle}{1em}{0em}{\itshape}{}{\bfseries}{.}{.5em}{}
% \newtheoremstyle{mytheoremstyle}
%   {\topsep} % Space above
%   {0pt} % Space below
%   {\bfseries} % Body font
%   {} % Indent amount
%   {\bfseries} % Theorem head font
%   {.} % Punctuation after theorem head
%   {.5em} % Space after theorem head
%   {} % Theorem head spec (can be left empty, meaning `normal')
% \theoremstyle{mytheoremstyle}

% %\newtheorem{thm}{Theorem}[section]
% \newtheorem{thm}{Theorem}
% \newtheorem{prop}{Proposition}
% \newtheorem{cor}{Corollary}
% \newtheorem{lem}{Lemma}
% \newtheorem{defn}{Definition}
% \newtheorem{assum}{Assumption}
% \newtheorem{remark}{Remark}

\theoremstyle{plain}
\newtheorem{thm}{Theorem}
\newtheorem{prop}{Proposition}
\newtheorem{cor}{Corollary}

\theoremstyle{definition}
\newtheorem{defn}{Definition}
\newtheorem{assum}{Assumption}

\theoremstyle{remark}
\newtheorem{remark}{Remark}

\newenvironment{assumbis}[1]
  {%
   \addtocounter{assum}{-1}%
   \begin{assum}}
  {\end{assum}}

\setlength{\bibsep}{0.0pt}
\setlength{\parskip}{1em}
\setlength{\parindent}{0.0pt}

\allowdisplaybreaks

%%%%%%%%%%%%%%%%%%%%%
% comments

%%%%%%%%%%%%%%%%%%%%%%%%%
% blinding: 1 for yes, 0 for no

\newcommand{\blind}{0}

%%%%%%%%%%%%%%%%%

\title{Causal identification for continuous-time stochastic processes}

\if1\blind
{
  \author{[Blinded for review]}
} \fi

\if0\blind
{
\author{Jinghao Sun$^{1}$ and Forrest W. Crawford$^{1,2,3,4}$ \\[1em]
1. Department of Biostatistics, Yale School of Public Health \\
2. Department of Statistics \& Data Science, Yale University \\
3. Department of Ecology \& Evolutionary Biology, Yale University \\
4. Yale School of Management \\
} 
} \fi

%%%%%%%%%%%%%%%%%%%%%%%%%%%%%%%%%%%%%%%%%%%%%%%%%%%%%%%%%

\begin{document}

\maketitle
\begin{abstract}
\noindent Many real-world processes are trajectories that may be regarded as continuous-time ``functional data''. Examples include patients' biomarker concentrations, environmental pollutant levels, and prices of stocks. 
Corresponding advances in data collection have yielded near continuous-time measurements, from e.g. physiological monitors,  wearable digital devices, and environmental sensors. Statistical methodology for estimating the causal effect of a time-varying treatment, measured discretely in time, is well developed. But discrete-time methods like the g-formula, structural nested models, and marginal structural models do not generalize easily to continuous time, due to the entanglement of uncountably infinite variables.  Moreover, researchers have shown that the choice of discretization time scale can seriously affect the quality of causal inferences about the effects of an intervention.  In this paper, we establish causal identification results for continuous-time treatment-outcome relationships for general \cadlag stochastic processes under continuous-time confounding, through orthogonalization and weighting. We use three concrete running examples to demonstrate the plausibility of our identification assumptions, as well as their connections to the discrete-time g methods literature. \\[1em] 

%Many variables of epidemiological, medical, and social interest are trajectories that may be regarded as continuous-time “functional data”, e.g. body mass index (BMI), treatment doses, and socioeconomic status. To study the causal effect of an exposure on an outcome with observational data, where the exposure, outcome, and confounders are all trajectories, established causal inference methods (the g-formula and marginal structural models) rely on the user to specify a temporal discretization of the subject timeline.  In this work, we define causal estimands based on functional trajectory data under the Potential Outcomes framework, and provide a novel causal identification framework that allows feedback between exposures and outcomes/covariates, and enables researchers to circumvent the choice of discretization scales. We also quantify the influence of discretization scale analytically and using simulations, contributing to the understanding and criteria for choosing the scales. Our framework also provides a new solution to estimate causal effects using irregular longitudinal data commonly seen in observational studies, where no discretization and imputation are needed. We apply our method to study the effect of social support strength on stress responses over time using the data from Amboseli Baboon Research Project. % \citep{rosenbaum2020social} 

\noindent\textbf{Keywords}: observational studies, functional data, time-varying treatment, orthogonalization, weighting, g methods
%g methods, time-varying confounding, treatment-confounder feedback, semimartingales, observational studies, functional data, longitudinal data, time-varying treatment, weighting, orthogonalization
\end{abstract}

% Journals: AOS/ECMT/JOE/JASA/Biometrika/Review of Economics and Statistics/JBES, Bernoulli/Stoc and its app/AOAP/JCI/Econometrics Journal, (add data app)Biostatistics/Biometrics/, obs studies/Life time data ana.

% I think this paper is not likely to be accepted directly at AoS, at least without substantial revision. However, I think it would be good if you engage with the paper and possibly build on it or develop special cases. Please formulate a plan for engaging with this paper. We can talk this afternoon if you want.

\newpage

\section{Introduction}

% \commentjs{Basically, you should assume that many reasonable readers will have the same questions as Fan. So you need to anticipate those questions and address them earlier in the paper. Instead of characterizing Gill's paper as "prior work", say "Gill (2004) conjectures that ... but as far as we are aware, the issues have not been resolved". You should emphasize in several places that your example 1 is a special case of the general continuous-time framework.}

Many variables of interest are trajectories that can be considered as continuous-time stochastic processes taking values in finite, countable or uncountable state spaces.
% , e.g. number of the COVID-19 infected population, greenhouse gas levels, HIV viral loads, socioeconomic status, and stock prices. % the population density of an endangered species. 
Longitudinal data have long been collected sparsely over time as proxies for continuous trajectories, e.g. the Framingham Heart Study \citep{mahmood2014framingham}, the Nurses' Health Study \citep{colditz1997nurses}, and the Survey of Health, Ageing, and Retirement in Europe (SHARE) \citep{borsch2013data}.
Recent technology advances in data collection have yielded near continuous-time dense measurements, i.e. functional data. For example, heart rates from physiological monitors, physical activities from wearable trackers, PM2.5 levels from air quality sensors, stock prices from online trading platforms, and brain images from functional magnetic resonance imaging (fMRI) techniques.
A wide variety of empirical studies have focused on the relationship between two trajectories. Examples include trajectories of close interpersonal contact on COVID-19 incidence \citep{crawford2022impact}, climate changes on agriculture \citep{blanc2020use}, trajectories of low-density lipoprotein cholesterol levels on cardiovascular disease risks \citep{domanski2020time}, and socioeconomic status trajectories on cognitive functions \citep{lyu2016socioeconomic}.
% socioeconomic status trajectories on children's health \citep{chen2007trajectories}, 
	
A central question in many scientific inquiries and policy evaluations is the causal effect of one trajectory (i.e. the \emph{treatment} or \emph{intervention} or \emph{exposure}) on another trajectory (i.e. the \emph{outcome}). Randomized experiments are often seen as the gold standard design for estimating unbiased causal effects. However, such experiments can be unethical, too difficult, or too expensive to implement in some scenarios. Instead, researchers often must resort to observational studies, where time-varying confounding and treatment-confounder (outcome) feedback can occur over time. Na\"ive adjustments for confounders in this setting can result in biased estimates of causal effects since the confounders may also be mediators of the effects of previous treatments on future outcomes \citep[Chap.~19,~20]{hernan2020july}.

Most of the existing literature on causal inference for continuous trajectories has focused on the case of longitudinal data. For example, \emph{g methods} \citep[Chap.~21]{hernan2020july}, including the g-formula \citep{gill2001causal}, marginal structural models and inverse probability-of-treatment weighting (IPTW) methods \citep{robins2000marginal}, and structural nested models and g-estimation methods \citep{robins1989analysis}, work with longitudinal data where each subject is measured at the same time points (i.e. \emph{regular} longitudinal data).
These methods can identify the causal effects of a sequence of treatments under assumptions such as sequential exchangeability, sequential consistency, and sequential positivity. 
These methods have been widely applied in empirical research in public health \citep[e.g.][]{thompson2021prevention, vangen2018hypothetical, vock2017survival, saul2019downstream}.
% other DT methods: matching, synthetic control, D-in-D, matrix completion.

However, when the data generating process (DGP) is continuous in time, issues arise for g-methods with discretely measured longitudinal data. For example, as \citet{sun2022role} point out, the discrete-time potential outcome notation has implicitly assumed \emph{no multiple versions of treatment} or \emph{treatment variation irrelevance} \citep{vanderweele2009concerning}, but this is rarely true: many continuous-time trajectories might coincide with a single observed sequence of discretely sampled treatments. These multiple distinct trajectories passing through the observe data comprise different versions of treatment, rendering some potential outcome notation ill-defined. In addition, the treatment assignment might depend on the unobserved values of the past treatment and covariate processes, causing violations of \emph{unconfoundedness}.
%In addition, the number of observations $J$ has an often overlooked role as a tuning parameter that determines the complexity of the underlying statistical model, due to the dependence of the identification assumptions on $J$. Thus, when an analyst changes $J$ in an empirical analysis, this changes the amount of data being analyzed as well as the statistical model, causing biases of unknown magnitudes.
% state why ID assumptions would fail in CT setting.

Several causal estimands and identification strategies for continuous-time treatments and outcomes have been proposed. Most prior literature focuses on piecewise-constant treatment processes, e.g. time-to-event treatments, counting process treatments, and (marked) point process treatments. Among them, \citet{yang2021semiparametric2}, \citet{zhang2011causal}, and \citet{lok2008statistical} generalize g-estimation; \citet{roysland2022graphical}, \citet{hu2021joint}, \citet{saarela2016flexible}, \citet{roysland2011martingale}, and \citet{johnson2005semiparametric} generalize IPTW methods; and \citet{rytgaard2022continuous} generalize the g-formula. 
% Causality for continuous-time DGPs has also been investigated without counterfactuals \citep[See e.g.][for a review]{aalen2012causality}. 
%Additionally, in the \emph{causality without potential outcomes} literature, causal frameworks for stochastic processes have been studied through the local independence approach for marked point processes \citep{didelez2015causal}, mechanistic view, \citep{aalen2012causality}, counterfactual measures \citet{roysland2011martingale, rytgaard2021continuous} causal frameworks without counterfactuals for stochastic processes: decision (dawid), dynamic (Aalen), Granger (Shojaie), local independence (didelez)
% \commentjs{\citet{janvin2022causal} use potential outcomes, and discussed how DT converges to CT estimand. for IPW methods. But I need to read carefully to see whether they gave identification proof for CT. They spent the majority of space to discuss competing events and counting process. The proof for id may only for DT.}
% hu2018modeling, hu2019causal, lok2017mimicking,  yang2020semiparametric

However, many exposures in empirical studies change continuously and take values in an uncountable set, e.g. air pollutant concentrations and blood pressures. In this paper, we develop novel causal identification strategies for general \cadlag trajectories of treatments, covariates, and outcomes under the potential outcome framework. Examples include discrete-time DGPs, piecewise-constant processes, continuous processes (e.g. diffusions), and continuous processes with jumps (e.g. diffusions with jumps). Therefore, the proposed strategies provide a unifying solution to a large class of continuous-time causal questions. 

In the following, we define causal estimands based on continuous-time potential outcomes, propose causal identification assumptions that generalize those employed in discrete-time g-methods, and present identification strategies mainly based on a continuous-time \emph{unconfoundedness} condition. Our approach identifies causal parameters based on moment conditions. It takes the form of \emph{orthogonalization} to remove measured confounding. Implicitly, it is also a \emph{weighting} strategy by using the inverse of the ``rate of change of the treatment" as weights. Three concrete causal DGPs satisfying all the identification assumptions are used as running examples for the exposition of the proposed strategies.
Recently, \citet{ying2022causal} generalized the g-formula and IPTW to the continuous-time general \emph{stochastic treatment regimes} setting.
In contrast, our approach focuses on causal estimands corresponding to \emph{deterministic treatment regimes}. 
%\commentjs{Nonparametric Conditional Local Independence Testing}

\subsection{Setting and notation}

Define a complete probability space $(\Omega, \mathscr{F}, \Prob)$, where $\Omega$ is the sample space, $\fF$ is a $\sigma$-algebra, and $\Prob$ is a probability measure. We consider a finite study period which ends at time $T$, and define the index set $\mathcal{T} = [0, T]$. In the following, each $d$-dimensional Euclidean space $\R^d$ (or its subspace $A \subseteq \R^d$) is equipped with its Borel $\sigma$-algebra $\mathscr{B}(\R^d)$ (\emph{resp.} $\mathscr{B}(A)$) \citep{jacod2013limit,kallenberg1997foundations}. 
% a standard prob space is complete by definition.

A process is \cadlag if all of its paths are right-continuous and admit left-hand limits. For a process $U_t$ admitting a left-hand limit at all $t$, define its left-limit process as $U_{t-} = \lim_{s \rightarrow t, s<t}U_s$.
Define the following \cadlag processes: the treatment process $W = (W^1,\ldots,W^q): \Omega \times \mathcal{T}\rightarrow \R^q$, the outcome process $Y: \Omega \times \mathcal{T}\rightarrow \R$, and the covariate process $Z = (Z^1, \ldots, Z^{p-1}): \Omega \times \mathcal{T}\rightarrow \R^{p-1}$. By convention, we use superscripts from $1$ to $q$ to represent each component of a multivariate process. 
%Exponential will be outside of parentheses to avoid ambiguity, e.g. square of $W^q$ would be $(W^q)^2$. 
Further denote $X = (X^1, \ldots, X^p) \equiv (Y, Z^1, \ldots, Z^{p-1})$. Denote $\rW(\omega, t) = \{W(\omega, s): 0 \le s \le t\}$ as the treatment trajectory up to time $t$, where $W(\omega, s)$ (or $W_s(\omega)$, whichever is convenient) is the value of the observable treatment trajectory at time $s$ for $\omega \in \Omega$. Similarly we define $\rX(\omega, t) = \{X(\omega, s): 0 \le s \le t\} = (\rY(\omega, t), \rZ(\omega, t))$. In the following, we will omit $\omega$ when it is unambiguous.

The \cadlag potential outcome trajectory for a subject up to time $t$ is defined as 
\[ \rX^{\rw(T)}_t = \{X^{\rw(T)}_s: 0 \le s \le t\} = (\rY^{\rw(T)}_t, \rZ^{\rw(T)}_t), \]
where $\rX^{\rw(T)}_t$ represents the values of the outcome and covariates of a subject, had this subject precisely followed the predetermined static treatment trajectory of interest $\rw(T) \equiv \{w(s): 0 \le 0 \le T\}$. We assume that a trajectory can only depend on the past, i.e., $\rX^{\rw(T)}_t = \rX^{\rw(t)}_t$ for $t<T$. In the following, we also use $\rw$ or $\rw_T$ to denote $\rw(T)$.
%\commentjs{Maybe no need to complete the stochastic basis. Should I find a reference measure for $P, Q$?}

% \commentjs{quote Miguel's words (ch19, What If) on deterministic regime is more desired.}

The causal estimand of interest is the counterfactual average outcome evaluated at time $T$, $\E[Y^{\rw(T)}(T)]$, under a deterministic treatment plan $\rw(T)$ specified up to $T$. This estimand can be used to compute the average treatment effect (ATE) of a treatment plan $\rw^*(T)$ compared to a baseline treatment plan $\bbzero$ that is constant $0$, i.e. 
\[ \E[Y^{\rw^*(T)}(T) - Y^{\bbzero}(T)], \]  
%where the notation $\E_{Q}$ means expectation with respect to a measure $Q$. 
or a weighted average of counterfactual means, i.e.
\[\int \E[Y^{\rw(T)}(T)] \dx{\Qrob(\rw(T))},\]
where $\Qrob$ is a known distribution over the space of treatment plans $\rw(T)$ under a stochastic intervention. 
% {There is a formal definition for integration wrt a measure on an inf-dim space, like Wiener Integral. (https://encyclopediaofmath.org/wiki/Wiener_integral)}

% This assumption is made because for some DGP, knowing the values versus not knowing have different influence on the processes. For example, if the exposure is exercise, and the outcome is weight. Then knowing verses not knowing the weight might change the natural behavior of the subject, even no explicit interventions are prescribed by the physicians. Thus, the values at the predetermined time points carry more ``influence'' to other processes than their unobserved neighbors.

%%%%%%%%%%%%%%%%%%%%%%%%%%%%%%%%%%%%
\subsection{Informal motivation}
\label{subsec:informal}

We first provide an informal explanation of the main identification results. We will start from the traditional discrete-time setting. Without loss of generality, suppose the values of all relevant processes are observed at $J+1$ equidistant time points, i.e. $\Delta_J \equiv \{t_0 =0, t_1, \ldots, t_J = T\}$ with $t_k = kT/J$. Following the convention in \citet{jacod2013limit}, we use a prime notation to distinguish discrete-time and continuous-time processes.
The observable data is $X^{\prime}_{k} \equiv (Y^{\prime}_{k}, Z^{\prime}_{k}), W^{\prime}_{k}$ at time $t_k$, $k = 0, 1, \ldots, J$, where $W^{\prime}_{k},Y^{\prime}_{k} \in \R, Z^{\prime}_{k} \in \R^{p-1}$. % \footnote{Note that $W$ and $\rW$ with subscripts ``$k, J$" are used for discrete-time data, while with subscripts ``$s, t, t_k, T$" are for continuous-time data.} 
For a deterministic static treatment plan $\rw^{\prime}_{J-1} \equiv (w^{\prime}_0, \ldots, w^{\prime}_{J-1})$, the corresponding potential outcome of interest is denoted as $Y_J^{\prime\rw^{\prime}_{J-1}}$. It is the value of the outcome at the end of the study had the individual followed the treatment plan $\rw^{\prime}_{J-1}$ before the $J$th time point. 
Let $\rW^{\prime}_k = (W^{\prime}_0, \ldots, W^{\prime}_k), \rX^{\prime}_k = (X^{\prime}_0, \ldots, X^{\prime}_k)$. 
%Our discrete-time observable data is $\mathcal{D}_J \equiv \{\rW_J, \rX_J\}$. 
Define the discrete-time filtration $\FF^{\prime} = (\fF^{\prime}_k) = (\sigma(\rW^{\prime}_k, \rX^{\prime}_k))$. Let $\overbar{\mathbf{0}}'$ be the baseline treatment plan with $w^{\prime}_k \equiv 0$.
The causal estimand is the counterfactual mean $\E[Y_J^{\prime\rw^{\prime}_{J-1}}]$.

We further suppose that this DGP satisfies the following conditions which discrete-time g methods usually assume for causal identification \citep{hernan2020july}.  %Let $\text{Supp}(U)$ be the support of a random element $U$.
\begin{assumbis}{assum:NID}[Discrete-Time Sequential Exchangeability (DTSE)]
	\label{assum:DTSE}
	With observable data $\rW^{\prime}_J, \rX^{\prime}_J$,
	\[\E[W^{\prime}_k|\rY^{\bar{\prime\mathbf{0}'}}_J, \rW^{\prime}_{k-1}, \rX^{\prime}_{k-1}] = \E[W^{\prime}_k|\rW^{\prime}_{k-1}, \rX^{\prime}_{k-1}],\]
	$k = 0, \ldots, J-1$. \footnote{This version is called ``conditional mean independence'', which is less restrictive than the ``conditional independence'' version $W^{\prime}_k \indep \rY^{\prime\bar{\mathbf{0}'}}_J | \rW^{\prime}_{k-1}, \rX^{\prime}_{k-1}$.}
\end{assumbis}

\begin{assumbis}{assum:CTC}[Discrete-Time Sequential Consistency (DTSC)]
	\label{assum:DTSC}
	For a treatment plan $\rw^{\prime}_{J-1}$,
	\[X^{\prime\rw^{\prime}_{J-1}}_{k} = X'_{k}, \text{ if } \rw^{\prime}_{k-1} =  \rW'_{k-1}, k = 1, \ldots, J; X^{\prime\rw^{\prime}_{J-1}}_{0} \equiv X'_{0}.\]
\end{assumbis}

% \begin{assumbis}{assum:CTC}[Discrete-Time Sequential Consistency (DTSC)]
%   \label{assum:DTSC}
% $\E(Y^{\rw^{*}_{J-1}}_{J}|\rX_{J-1} = \rx_{J-1}, \rW_{J-1} = \rw^*_{J-1}) = \E(Y_{J}|\rX_{J-1} = \rx_{J-1}, \rW_{J-1} = \rw^*_{J-1})$ for $(\rx_{J-1}, \rw^*_{J-1}) \in \text{Supp}(\rX_{J-1}, \rW_{J-1})$.
% \end{assumbis}

\begin{assumbis}{assum:CSM}[Discrete-Time Causal Structural Model (DTCSM)]
	\label{assum:DTCSM}
	The potential outcome satisfies the following marginal structural model
	\[Y^{\prime\rw^{\prime}_{J-1}}_k = \eta_0 + \eta_1\frac{T}{J}\sum_{i=0}^{k-1}w^{\prime}_i + f(Z^{\prime\rw^{\prime}_{J-1}}_{k-1}) + \epsilon_k , \]
  \[ Z^{\prime\rw^{\prime}_{J-1}}_{k-1} \equiv Z_{k-1},\]
\end{assumbis}
where $f$ is a general function and $\E\epsilon_k = 0$. The structural model in Assumption \ref{assum:DTCSM} is intended to simplify the exposition, and more complex structural models can also be compatible with the approach described in this paper.

% This Z' indicates that W do not affect Z, a similar case to Eg3. This is because we want to simply this example. Eg2 gives a more complex case where Z is affected by W and thus the treatment effect $\tau^Z$ should also be taken into consideration.

% \begin{assumbis}{assum:CTP-A}[Discrete-Time Sequential Positivity (DTSP)]
%   \label{assum:DTSP}
%   For all $(\rx^{\prime}_k, \rw^{\prime}_{k-1}) \in \text{Supp}((\rX^{\prime}_k, \rW^{\prime}_{k-1}))$, $(\rx^{\prime}_k, \rw^{\prime}_{k}) \in \text{Supp}((\rX^{\prime}_k, \rW^{\prime}_{k}))$, $k = 0, \ldots, J-1$.
% \end{assumbis}

Denote the true value of the parameter $\eta_1$ in the structural model as $\eta_1^*$. By Assumptions \ref{assum:DTSC} and \ref{assum:DTCSM},
\[ \E[Y_J^{\prime\rw^{\prime}_{J-1}}] = \E[Y_J^{\prime} - \eta_1^*\frac{T}{J}\sum_{i=0}^{J-1}W^{\prime}_i + \eta_1^*\frac{T}{J}\sum_{i=0}^{J-1}w^{\prime}_i]. \]
Thus, to identify the average potential outcome, it suffices to identify the causal parameter $\eta_1^*$. 
Consider the following function of $\eta_1$:
\begin{equation}
	f(\eta_1) \equiv \E\left[ \sum_{i=1}^J Y'_{i-1}\left(Y_J^{\prime} - \eta_1\frac{T}{J}\sum_{l = 0}^{J-1}W_l'\right) \left\{W'_i - \E(W'_i|\rW'_{i-1}, \rX'_{i-1})\right\} \right].
	\label{eq:eg1_f}
\end{equation}
We claim that solving the equation $f(\eta_1) = 0$ identifies $\eta_1^*$.
This is because
\begin{equation*}
	\begin{aligned}
		f(\eta_1^*) &= \E\left[ \sum_{i=1}^J Y'_{i-1}\left(Y_J^{\prime} - \eta_1^*\frac{T}{J}\sum_{l = 0}^{J-1}W_l'\right) \left\{W'_i - \E(W'_i|\rW'_{i-1}, \rX'_{i-1})\right\} \right]\\
		&\text{(By Assumptions \ref{assum:DTSC} and \ref{assum:DTCSM}, $Y^{\prime\bbzero'}_k =  Y^{\prime}_k - \eta_1^*\frac{T}{J}\sum_{i=0}^{k-1}W^{\prime}_i$)} \\
		&= \E\left[ \sum_{i=1}^J Y'_{i-1} Y^{\prime\bbzero'}_J\left\{W'_i - \E(W'_i|\rW'_{i-1}, \rX'_{i-1})\right\} \right]\\
		&=  \sum_{i=1}^J \E\left[Y'_{i-1} Y^{\prime\bbzero'}_J\left\{\E[W'_i|Y^{\prime\bbzero'}_J,\rW'_{i-1}, \rX'_{i-1} ] - \E(W'_i|\rW'_{i-1}, \rX'_{i-1})\right\} \right]\\
		&(\text{by Assumption \ref{assum:DTSE}})\\
		&= 0.
	\end{aligned}
\end{equation*}

The above identification strategy follows the idea of \emph{orthogonalization}, by partialling out the effect of the observed past history (i.e. $\rW'_{i-1}, \rX'_{i-1}$) on the current treatment assignment $W'_i$. 
Then, by unconfoundedness (Assumption \ref{assum:DTSE}), the remainder $\tilde{W}'_i \equiv W'_i - \E(W'_i|\rW'_{i-1}, \rX'_{i-1})$ will be ``orthogonal'' to any functions $h(Y^{\prime\bbzero'}_J, \rW'_{i-1}, \rX'_{i-1})$. In the case above, $h = Y'_{i-1} Y^{\prime\bbzero'}_J$. 
The orthogonalization idea has appeared in recent causal inference literature. For example, \citet{chernozhukov2018double} give a concrete example of using orthogonalization to estimate treatment effects with cross-sectional data in the presence of high-dimensional confounders. \citet{bates2022causal} propose an orthogonalization form of parametric g-formula for discrete-time longitudinal data by partialling out the effect of the past history on the current covariates $Z'_i$. 

Next, we move towards the continuous-time setting. Define the differences $\delta W'_i \equiv W'_i - W'_{i-1}$ and $\delta A'_i \equiv \E(W'_i|\rW'_{i-1}, \rX'_{i-1}) - W'_{i-1} = \E(\delta W'_i|\rW'_{i-1}, \rX'_{i-1})$. Then, Equation \eqref{eq:eg1_f} can be rewritten as 
\[ f(\eta_1) = \E\left[ \sum_{i=1}^J Y'_{i-1}\left(Y_J^{\prime} - \eta_1\frac{T}{J}\sum_{l = 0}^{J-1}W_l'\right) \left\{\delta W'_i - \delta A'_i\right\} \right].\]
Note that the study period is $[0, T]$ and fixed. Thus, when $J$ tends to infinity, in analogy to how Riemann sums approach Riemann integrals, informally, the above sum will approach an \emph{\Ito stochastic integral} in continuous-time
\[\E\left[ \int_0^T Y_{s-}\left(Y_T - \eta_1\int_0^T W_t \dx{t}\right) \left\{\dx W_s - \dx A_s\right\} \right],\]
which equals zero when evaluated at $\eta_1^*$. As will be seen later, the stochastic process $A$ is the \emph{compensator} of the treatment $W$ with respect to the observable history. Then, the process $M = W-A$ is the remainder of the treatment process after partialling out all the past observable information.
This simple example demonstrates the main ideas behind our identification strategies in Sections \ref{sec:assump} and \ref{subsec:main}. Its relation to weighting is explained in Section \ref{subsec:weight}.

%%%%%%%%%%%%%%%%%%%%%%%%%%%%%%%%%%%%

% \section{Motivating example}

% \comment{If you do include a motivating example here, it should be primarily described via pictures and a little notation. But the notation you develop below is complicated and takes a lot of space to describe. Are you sure you can do this in a way that only "motivates"?  Might be too complicated. } 

% % Find publicly available data, which has been analyzed using causal inference methods. Farmingham Heart study? Search the literature and zotero lists.

% Amboseli after discretization? 

%%%%%%%%%%%%%%%%%%%%%%%%%%%%%%%%%%%%

\subsection{More setting and notation}

We introduce more concepts and notations for continuous-time processes that will be needed to develop the proposed identification theory.

It is convenient to use a \emph{filtration} to represent all the available information from the processes up to a given time. For a process $(U_t)$ taking values in a measurable space $(E, \mathcal{E})$, define its natural filtration as $\FF^U = (\fF^U_t)_{t\in\mathcal{T}}$, where ${\fF_t}^{U} = \sigma(U_s: s \le t)$.
A complete filtration is sometimes needed for theoretical developments, but a natural filtration is not necessarily complete. Thus, we define its completion. Let $\mathcal{N}$ be the set of all $\Prob$-zero-measure sets in $\fF$. For a filtration $\mathbb{H} = (\mathscr{H}_t)$, define its completion as the filtration $\tilde{\mathbb{H}} = (\tilde{\mathscr{H}}_t)$ with $\tilde{\mathscr{H}}_t = \mathscr{H}_t \bigvee \mathcal{N}$, which is the smallest complete filtration such that $\mathscr{H}_t \subseteq \tilde{\mathscr{H}}_t$. Here, $A \bigvee B$ is the smallest sigma-field generated by $A$ and $B$, i.e. $\sigma(A\bigcup B)$, where $A,B\subseteq \fF$.

Define $\FF = (\fF_t)$ with $\fF_t \equiv \fF_t^{W,X}$. Thus, $\fF_t$ represents all the information available from the observable data $W, Y, Z$ up to time $t$. Let $\sigma(\rY^{\rw(T)}_T)$ be the $\sigma$-field generated by the potential outcome trajectory. It contains all the information of the potential outcome under the treatment plan $\rw(T)$ within the study period $\mathcal{T}$.
Define the initially enlarged filtration $\FG = (\fG_t)$, where 
\[\fG_t = \fF_t \bigvee \sigma(\rY^{\bar{\mathbf{0}}}_T),\] 
and $\rY^{\bar{\mathbf{0}}}_T$ is the potential outcome trajectory when the treatment of interest is the constant function $0$ on $\mathcal{T}$, i.e. $w_s = 0$ for all $s$. 
Intuitively, if one has access to $\fG_t$, then one not only knows the trajectories of $W,Y,Z$ and the potential outcome $Y^{\bar{\mathbf{0}}}$ up to time $t$, but can also peek into future values of the potential outcome up to the end of the study. Therefore, we call $\FF$ the \emph{factual} (or \emph{observable}) \emph{filtration}, and $\FG$ the \emph{counterfactual filtration}.

%\commentjs{Statistical models can generally, perhaps after suitable transformation, be described as data = signal + noise where the signal is a predictable trend term and the noise is a zero-mean stochastic disturbance. This can then be conveniently reformulated in terms of a special semimartingale representat.... This is good exposition.}

Data in statistics can often be described as the sum of signal and noise. This idea is captured by the notion of a special semimartingale \citep{kallenberg1997foundations} for stochastic processes.
We focus on the case where the treatment process $W$ is an $\mathbb{F}$-special semimartingale. 
Formally, given a generic filtration $\FH = (\fH_t)$, a real-valued process $(U_t)$ is called an $\FH$-semimartingale if \[U_t = U_0 + M^U_t + A^U_t,\] where $M^U_0 = A^U_0 = 0$, $U_0$ is $\fH_0$-measurable, $M^U_t$ is an $\FH$-local martingale, and $A^U_t$ is a \cadlag $\FH$-adapted finite variation process. Furthermore, a semimartingale $U_t$ is an $\FH$-special semimartingale if $A^U_t$ is also $\FH$-predictable. Then the decomposition is unique (i.e. the \emph{canonical decomposition}), and $A^U$ is called the \emph{compensator} of $U$. 

Intuitively, $A^U_t$ reflects the infinitesimal systematic change of the treatment process given the past history, while $M^U_t$ reflects the remaining variation (i.e., the ``random noise"). An $\R^k$-valued process is a (special-) semimartingale if each of its components is a (special-) semimartingale. Examples of special semimartingales include submartingales or supermartingales, counting processes, and continuous semimartingales (e.g. diffusions and \Ito processes). We focus on special semimartingales for two reasons: (1) we can resort to the powerful results from \Ito calculus, since the semimartingales form the largest class of processes as integrators for which the \Ito integral can be defined; and (2) the uniqueness of the decomposition of special semimartingales avoids unnecessary technicalities for practical applications.

For convenience, we summarize our setting:
\begin{assum}[Setting]
	We assume \\
	(1) Processes $W_t, X_t$, the static treatment plan $w_t$, and the potential outcome process $X^{\rw}_t$ are \cadlag. \\ %\footnote{A process $Z_t$ is uniformly bounded on $[0, T]$ if there exists a constant positive number $K$ such that $|Z_t| < K, \forall t \in [0, T]$.}\\
	% the boundedness is useful when using numerical methods to approximate stochastic integral. See wiki Ito calculus. Thus, for an integrand $H$, the integral $\int_0^T H\dx{M} = \int_0^\infty H 1{0 \le s \le T}\dx{M}$, then H 1{0 \le s \le T} is bounded. The boundedness assumption usually makes sense in real data.
	(2) $W$ is an $\mathbb{F}$-special semimartingale. 
	% (3) Future values of treatment cannot affect past values of the outcome and covariates. \comment{Can you make this one more precise? What does it really mean?} \\
	\label{assum:setting}
\end{assum}

% The following is for regular data:
% We disentangle the observation plan from the underlying causal DGP. In particular, we focus on the case where all the observation times are predetermined, and the observation behavior itself has no consequences on the processes being observed. In other words, if we change the observation plan, the underlying processes will remain the same for each individual. An observation plan with $J+1$ observations is defined as $\Delta^J \equiv \{t_0 =0, t_1, \ldots, t_J = T\}$ with $t_{k-1} < t_{k}, k \in [J]$, where $[N]$ for an integer $N$ represents the set $\{1,2,\ldots, N\}$. Define the mesh of this plan as $|\Delta^J| \equiv \max_{k \in [J]} (t_k - t_{k-1})$. Then our observable data is $\mathcal{D}(\Delta^J) = \{(W_{k} \equiv W(t_k), X_{k} \equiv X(t_k))_{k = 0}^{J}\}$. We denote $\rW_k = (W_0, \ldots, W_k), \rX_k = (X_0, \ldots, X_k),\rY_k = (Y_0, \ldots, Y_k).$ \footnote{Note that $\rW_k$ with subscript ``$k$" is used for discrete data, while $\rW_s$ or $\rW_t$ with subscript ``$s,t$" is for the whole trajectory up to time $s$ or $t$.}

%%%%%%%%%%%%%%%%%%%%%%%%%%%%%%%%%%%%%%%%%%%%%%%%%%%%%%

% \section{A brief overview of stochastic analysis}
% \comment{Should we do this?}\\
% processes; filtration; stopping time; finite variation process; predictable process and sigma-field; martingale; local martingale; \Ito integral and its computation; preservation of local martingale under \Ito integral.

\section{Examples}

We present three classes of continuous-time DGPs as running examples. We will verify our identification assumptions for each of them later, showing that the statistical model $\mathcal{M}$ is nontrivial. Figure \ref{fig:realization} shows realizations from each example DGP.

\begin{figure}
	\centering
	\includegraphics[width = \linewidth]{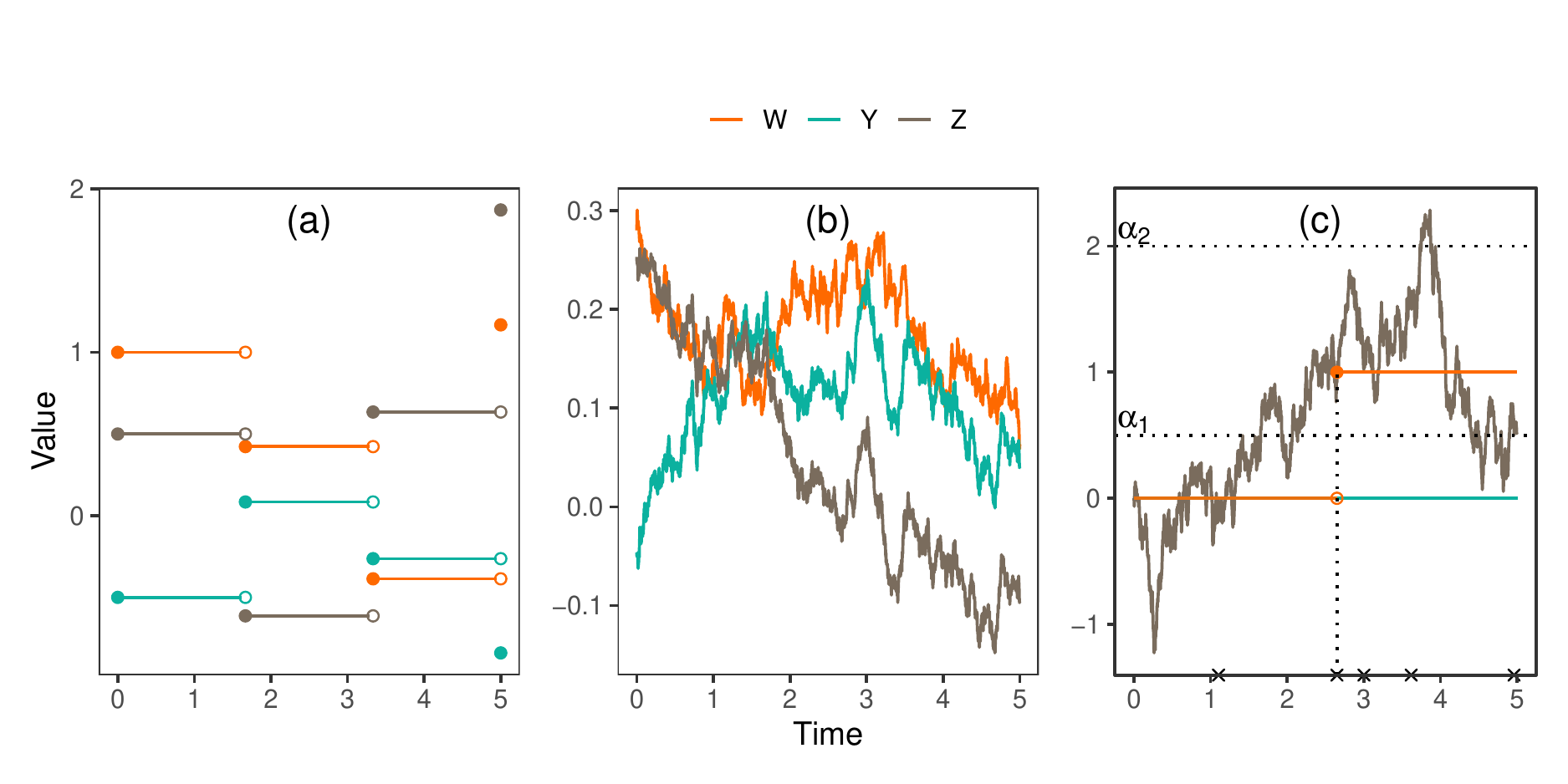}
	\caption{ Realizations of Example 1 (Panel a), Example 2 (Panel b), and Example 3 (Panel c). Crossings on the x-axis of Panel c are the jump times of the driving Poisson process $N$. ($W$: treatment, $Y$: outcome, $Z$: covariate.) }
	\label{fig:realization}
\end{figure}
%%%%%%%%%%%%%%%%%%%%%%%%%%%%%%%%%%%%%%%%

\subsection{Example 1: Discrete-time DGPs}
\label{subsec:eg1}

This is the example described in Section \ref{subsec:informal}, where the true DGP is discrete-time. Any discrete-time DGP has a unique corresponding continuous-time representation \citep[Chap.~1]{jacod2013limit} as step functions, and thus the discrete-time DGP is a special case of our general setting. Recall that the causal estimand is $\E[Y_J^{\prime\rw^{\prime}_{J-1}}]$, the discrete-time filtration is $\FF^{\prime} = (\fF^{\prime}_k) = (\sigma(\rW^{\prime}_k, \rX^{\prime}_k))$, and $\FG^{\prime} = (\fG^{\prime}_k)$ with $\fG^{\prime}_k \equiv \fF^{\prime}_k \bigvee \sigma(\rY_J^{\prime\overbar{\mathbf{0}}'})$. We will focus on its equivalent continuous-time representation below.
A realization of $W,X$ under this DGP is shown in Figure \ref{fig:realization}(a).

Define the continuous-time processes $W_t \equiv W^{\prime}_{\floor*{Jt/T}}, X_t \equiv X^{\prime}_{\floor*{Jt/T}}, w_t = w^{\prime}_{\floor*{Jt/T}}, Y_t^{\rw} = Y_{\floor*{Jt/T}}^{\prime\rw^{\prime}_{J-1}}$, which are step functions of $t \in [0, T]$. Here $\floor*{r}$ is the largest integer that is not greater than the real number $r$.
% Similarly, for a discrete-time treatment plan $\rw_{J}$, the continuous-time version of treatment process of interest corresponding to $\rw_J$ is $w_t = w_{\floor*{t}}$. 
Define ${\mathbb{F}} = ({\fF_t})_{t \in \mathcal{T}}$ with $\fF_t = \fF_t^{W,X}$. $\mathbb{F}$ is the history of all the observed information up to each time $t$. Define $\mathbb{G} = (\fG_t)$ with $\fG_t = \fF_t \bigvee \sigma(\rY^{\bar{\boldsymbol{0}}}_T)$. The $\sigma$-algebra $\fF_t, \fG_t$ will remain the same between two observation time points since no new information is observed in between. By \citet[Chap.~1.4]{jacod2013limit}, since $W'$ is $\FF'$-adapted, $W$ is an $\FF$-semimartingale. Then, given $\E\left[\sum_{1\le i \le J} |W'_i - W'_{i-1}|\right] < \infty$, $W$ will be an $\FF$-special semimartingale. Thus, Assumption \ref{assum:setting} is satisfied.

%%%%%%%%%%%%%%%%%%%%%%%%%%%%%%%%

\subsection{Example 2: Continuously-valued treatment and outcome}
\label{subsec:eg2}

Many variables in observational studies are continuous, e.g. blood pressures and household incomes. 
We show an example where $W_t, X_t$ are continuous random variables. We use the multivariate Ornstein-Uhlenbeck process \citep[Chap.~1]{iacus2009simulation}, a type of diffusion processes, as the DGP. It has wide applications in physics \citep{uhlenbeck1930theory}, evolutionary biology \citep{hunt2007relative}, and mathematical finance \citep{leung2015optimal, bjork2009arbitrage}.

Let $\boldsymbol{B}_t = (B_{t}^1, B_{t}^2)$ be a $2$-dimensional standard Brownian motion with respect to its natural filtration. Define $Y_t, W_t, Z_t$ as the outcome process, the treatment process, and the covariate process, with initial random vector $(Y_0,  W_0, Z_0)^{\intercal}$ which is independent of $B^1, B^2$. Define $\mathbb{H} = (\mathscr{H}_t)_{t \in \mathcal{T}}$ where $\mathscr{H}_t \equiv \fF_t^{B^1, B^2} \bigvee \mathcal{N} \bigvee \sigma(Y_0,  W_0, Z_0)$. We consider a 3-dimensional OU process \citep{gardiner1985handbook} as the causal mechanism, which is the unique $\mathbb{H}$-strong solution of the following SDE:
\begin{equation}
	\dx{\begin{pmatrix*} Y_t \\ W_t \\Z_t \end{pmatrix*}} = -\boldsymbol{\beta}\begin{pmatrix*} Y_t \\ W_t \\ Z_t \end{pmatrix*}\dx{t} + \boldsymbol{\sigma}\dx{\boldsymbol{B}_t},
	\label{eq:3dOU}
\end{equation}
where 
\[ \boldsymbol{\beta} =\begin{pmatrix}
	&\beta_{11}, &\beta_{12}, &\beta_{13} \\
	&\beta_{21}, &\beta_{22}, &\beta_{23} \\
	&\beta_{31}, &\beta_{32}, &\beta_{33} 
\end{pmatrix}  \text{ and }
\boldsymbol{\sigma}= \begin{pmatrix}
	&\sigma_{11}, &0\\
	&0, &\sigma_{22} \\
	&\sigma_{31}, &0
\end{pmatrix} \]
are constant matrices.  Intuitively, the \emph{drift} parameter $\boldsymbol{\beta}$ describes the instantaneous expected influence between observable processes, while the \emph{diffusion} parameter $\boldsymbol{\sigma}$ specifies the structure and magnitudes of the impact of the external noises $\boldsymbol{B}$ on observable processes. For example, the parameter $\boldsymbol{\sigma}$ in Equation \eqref{eq:3dOU} indicates that the exposure and outcome processes do not have common external sources of influence. This specification has important implications with regard to the unconfoundedness condition, as will be shown later.
%The causal mechanism can be illustrated by the directed acyclic diagram within infinitesimal time in Figure \ref{fig:OUDAG}. 
% A realization of $W,X$ is shown in Figure \ref{fig:simu}.
A realization of $W,X$ under this DGP is shown in Figure \ref{fig:realization}(b).

% \begin{figure}
%   \centering
%   \includegraphics[width=0.5\textwidth]{OUDAG_assump.pdf}
%   \caption{Causal diagram for Ornstein-Uhlenbeck process DGP  within infinitesimal time in Example 1. $Y$: outcome, $W$: treatment, $Z$: covariate, $\dx{B}^1, \dx{B}^2$: brownian driving noise increments.} 
%   \label{fig:OUDAG} % put label to the end. otherwise, it has section number referenced.
% \end{figure}

Given an arbitrary treatment trajectory of interest $\rw(T)$, the potential outcome process is the strong solution of
\begin{equation}
	\begin{aligned}
		\dx{Y_t^{\rw(T)}} &= -\left(\beta_{11} Y_t^{\rw(T)} + \beta_{12} w_t + \beta_{13}Z_t^{\rw(T)} \right)\dx{t} + \sigma_{11}\dx{B}^1_{t} \\
		\dx{Z_t^{\rw(T)}} &= -\left(\beta_{31} Y_t^{\rw(T)} + \beta_{32} w_t + \beta_{33}Z_t^{\rw(T)} \right)\dx{t} + \sigma_{31}\dx{B}^1_{t}\\
	\end{aligned}
	\label{eq:ou_po}
\end{equation}
with $Y_0^{\rw(T)} \equiv Y_0, Z_0^{\rw(T)} \equiv Z_0$.
Suppose $Y, W, Z$ are all observable, while $B_{t}^1, B_{t}^2$ are unmeasured, then we have observable filtration $\mathbb{F} = (\fF_t)$ with $\fF_t = \fF_t^{Y, W, Z}$. The enlarged filtration is $\mathbb{G} = (\fG_t), \fG_t = \fF_t \bigvee \sigma(\rY_T^{\bar{\boldsymbol{0}}})$. Since $W$ is a diffusion process, it is an $\FF$-special semimartingale, and therefore Assumption \ref{assum:setting} is satisfied.

%%%%%%%%%%%%%%%%%%%%%%%%%%%%

\subsection{Example 3: Time-to-event treatment and outcome}
\label{subsec:eg3}

Time-to-event variables are common in public health and social sciences, e.g. disease occurrences and financial defaults. In the following, we describe a causal DGP where both the treatment and outcome are time-to-event data and the covariate is a continuous process. For example, the treatment is the time-to-initiation of a medical treatment, and the outcome is the time to an adverse health event.

Let $N$ and $B$ be a standard Poisson process and a standard Brownian motion respectively, with respect to their natural filtrations. Define $Y, W, Z$ as the outcome, treatment and covariate processes with $Y_0 = W_0 = Z_0 = 0$, which are all observable. Define the random time at which $Z_t$ first rises above a threshold $\alpha_1$ as $\iota^1 = \inf\{t\in\mathcal{T}: Z_t \ge \alpha_1\}$, and the time at which $Z_t$ rises above another threshold $\alpha_2$ as $\iota^2 = \inf\{t\in\mathcal{T}: Z_t \ge \alpha_2\}$, and by convention let $\inf\emptyset = \infty$. Define the corresponding indicator processes $U^1_t = \indicator{t \ge \iota^1}$ and $U^2_t = \indicator{t \ge \iota^2}$.
Define $\FH = (\fH_t)$ with $\fH_t = \fF^{N,B}\bigvee \mathcal{N}$. 
The causal DGP is the $\FH$-strong solution of the following SDE:
\begin{equation*}
\begin{split}
	\dx{Z_t} &= \alpha_0\dx{B_t} \\
	\dx{W_t} &= U^1_{t-}(1 - W_{t-})\dx{N_t} \\ 
	\dx{Y_t} &= (1 - W_{t-})\dx{U^2_t}
	\end{split}
\end{equation*}

Here, $\boldsymbol{\alpha} = (\alpha_0, \alpha_1, \alpha_2)$ are model parameters, where $0 < \alpha_1 < \alpha_2$. As an interpretation of the above DGP, $Z$ represents the severity of a disease whose volatility of progression is characterized by $\alpha_0$, and when $Z$ passes a certain threshold (i.e. $\alpha_1$) at time $\iota^1$, the subject will start to seek medical treatment and wait for a random time before the procedure/surgery or medication become available.
%The medical treatments are intended to prevent the adverse health event. 
If the disease progresses to $\alpha_2$ at time $\iota^2$ before the medical treatment starts, then the subject will experience a severe adverse event. Since $W_t$ is a counting process, Assumption \ref{assum:setting} is obviously satisfied. A realization of $W,X$ is shown in Figure \ref{fig:realization}(c).

For a one-jump treatment plan $\rw_T$, the potential outcome process is
\begin{equation*}
\begin{split}
		\dx{Z^{\rw}_t} &= \alpha_0\dx{B_t} \\
	\dx{Y}^{\rw}_t &= (1 - w_t)\dx{U^{2, \rw}_t},
	\end{split}
\end{equation*}
where $U^{2, \rw}_t = \indicator{t \ge \iota^{2,\rw}}$ and $\iota^{2, \rw} = \inf\{t\in\mathcal{T}: Z^{\rw}_t \ge \alpha_2\}$. We thus define the observable filtration $\FF = (\fF_t)$ with $\fF_t = \fF_t^{Y, W, Z}$, and the expanded filtration $\FG = (\fG_t)$ with $\fG_t = \fF_t \bigvee \rY^{\bar{\boldsymbol{0}}}_T$.

%%%%%%%%%%%%%%%%%%%%%%%%%%%%%%%%%%%%%%%%

\section{Definition of identification}

Before describing our identification strategies, we need to first define the meaning of ``identification" in this paper. Following \citet{lewbel2019identification, basse2020general}, we now formalize what \emph{identification} entails generally.
% As \citet{lewbel2019identification} pointed out, there are ``over two dozen different terms for identification'' in statistics and econometrics, thus before we describe our identification strategy, we need to first define what we mean by ``identification" in this paper. Our definition is close to those in \citet{lewbel2019identification, basse2020general}.
%The probability space $(\Omega, \mathscr{F}, \Prob)$ is a technical device and is fixed in advance. \comment{Is this really necessary to say here? The space has already been introduced.}
Define the statistical model as $\mathcal{M}$, which contains all the DGPs that satisfy the specified assumptions. 
For a DGP $m \in \mathcal{M}$, a function $\Psi(m)$ defines the estimand of interest. Define $\Theta = \{\Psi(m): m \in \mathcal{M}\}$ as the set of possible estimand values. Define function $\Phi(m)$ mapping the DGP to observable information, and $\Xi = \{\Phi(m): m \in \mathcal{M}\}$ as the image of $\mathcal{M}$. Denote the true DGP as $m_0$, and $\theta_0 = \Psi(m_0), \xi_0 = \Phi(m_0)$. Thus $\xi_0$ is what we can observe from the data.
\begin{defn}
	An \emph{identification strategy} is a function $g: \Xi\times \Theta \rightarrow \R^k$ such that 
\[g\left(\Phi(m), \Psi(m)\right) = \boldsymbol{0}, \forall m \in \mathcal{M}.\] 
	\label{defn:id}
\end{defn}
In other words, $g$ is an identification strategy if and only if $g$'s set of zeros $\Lambda(g) = \{(\xi, \theta)\in \Xi\times \Theta: g(\xi, \theta) = 0\}$ is a superset of the image of $\mathcal{M}$ under the function $(\Phi, \Psi)$, $\Lambda_0 \equiv \{(\Phi(m), \Psi(m)): m \in \mathcal{M}\}$, i.e. \[\Lambda_0 \subseteq \Lambda(g).\] 
We call an identification strategy $g$ \emph{sharp} if $\Lambda(g) = \Lambda_0$. We say that $g$ point-identifies $\theta_0$ when $g\left(\xi_0, \Psi(m)\right) = 0$ if and only if $\Psi(m) = \Psi(m_0), \forall m_0 \in \mathcal{M}$; otherwise, we say $g$ partially identifies $\theta_0$. 

For example, for a metric space $(\Theta, d)$, when there is an explicit function $\tilde{g}$ such that $\Psi(m) = \tilde{g}(\Phi(m))$, the corresponding identification strategy $g(\xi, \theta) \equiv d(\tilde{g}(\xi), \theta)$ is sharp and point-identifies $\theta_0$. 
Figure \ref{fig:id} is a visual representation of the above concepts. 
In panel (b), the black area in $\Xi\times\Theta$ is $\Lambda_0$, and the grey area is $\Lambda(g)$. Since $\Lambda(g)$ is a strict superset of $\Lambda_0$, $g$ is not sharp; since a single value of $\xi \in \Xi$ may correspond to many different $\theta$'s such that $(\xi, \theta) \in \Lambda(g)$, $g$ partially identifies $\theta_0$.

The causal identification is a special case of the above definition where the estimand is a causal estimand that is related to counterfactual variables. In particular, in our causal question, different DGP $m$'s correspond to different functions $W,X,\{Y^{\rw}\}_{\rw\in \mathcal{W}}$ mapping from $\Omega\times \mathcal{T}$, where $\mathcal{W}$ is the set of all treatment plans. We omit the notation showing this dependence on $m$ when it is not ambiguous. Then, the causal estimand is $\Psi(m) \equiv \E[Y_T^{\rw}]$ and the observable information is $\Phi(m) = \{(W, X)\} = \{(W(\omega, t), X(\omega, t)): \omega \in \Omega, t \in \mathcal{T}\}$. Knowing the function $(W,X)$ implies the usual saying that ``an infinite number of i.i.d. realizations of $(W_t, X_t)$ are available''. 
In Section \ref{sec:assump}, we will introduce the assumptions that specify the statistical model $\mathcal{M}$, and in Section \ref{sec:id}, we will give a class of identification strategies $g$.

\begin{figure}
	\centering
	\begin{subfigure}{.35\textwidth}
		\centering
		\begin{tikzpicture}[scale=0.8, transform shape]
			\draw plot [smooth cycle] coordinates {(5,0.25) (5.5,0.35) (6, 0.2) (6.5,0.5) (6.3,1.65) (6,2.75) (5.2,2.75) (5.1,1.45) (4.3,0.85) } node at (5.5,1.7) {$\xi$};
			\draw plot [smooth cycle] coordinates {(1.5,1)(1.7,2)(2,2.2)(2.5,3)(3,2.7)(2.8,2)(3.2,1)(2,0.3)} node at (2.2,1.5) {$m$};
			\draw plot [smooth cycle] coordinates {(3.5,4.5) (4,5.5) (5, 5.6) (5.2,5.3) (5,5) (5.2,4) (4,4) } node at (4.5,4.8) {$\theta$};
			\node at (1,1) {$\mathcal{M}$};
			\node at (1.5,0) {Statistical Model};
			\node at (6.8,1.5) {$\Xi$};
			\node at (5.2,0) {Observable Information};
			\node at (4,6) {$\Theta$};
			\node at (2.5,5) {Estimand};
			\path[->] (2.2,1.7) edge [bend left] node[below] {$\Psi$} (4.3,4.7);
			\path[->] (2.4,1.4) edge [bend right] node[above] {$\Phi$} (5.3,1.6);
			% \draw plot [smooth cycle] coordinates {(5,0.25) (6,0.35) (6.5, 0.2) (7,0.5) (7,1.65) (6.5,2.75) (5.8,2.75) (5.3,1.45) (4.8,0.85) } node at (6,1.7) {$\phi$};
		\end{tikzpicture}
		\caption{}
		\label{fig:id1} 
		\end{subfigure}%
		\begin{subfigure}{.65\textwidth}
		\centering
		\includegraphics[width = 0.99\textwidth]{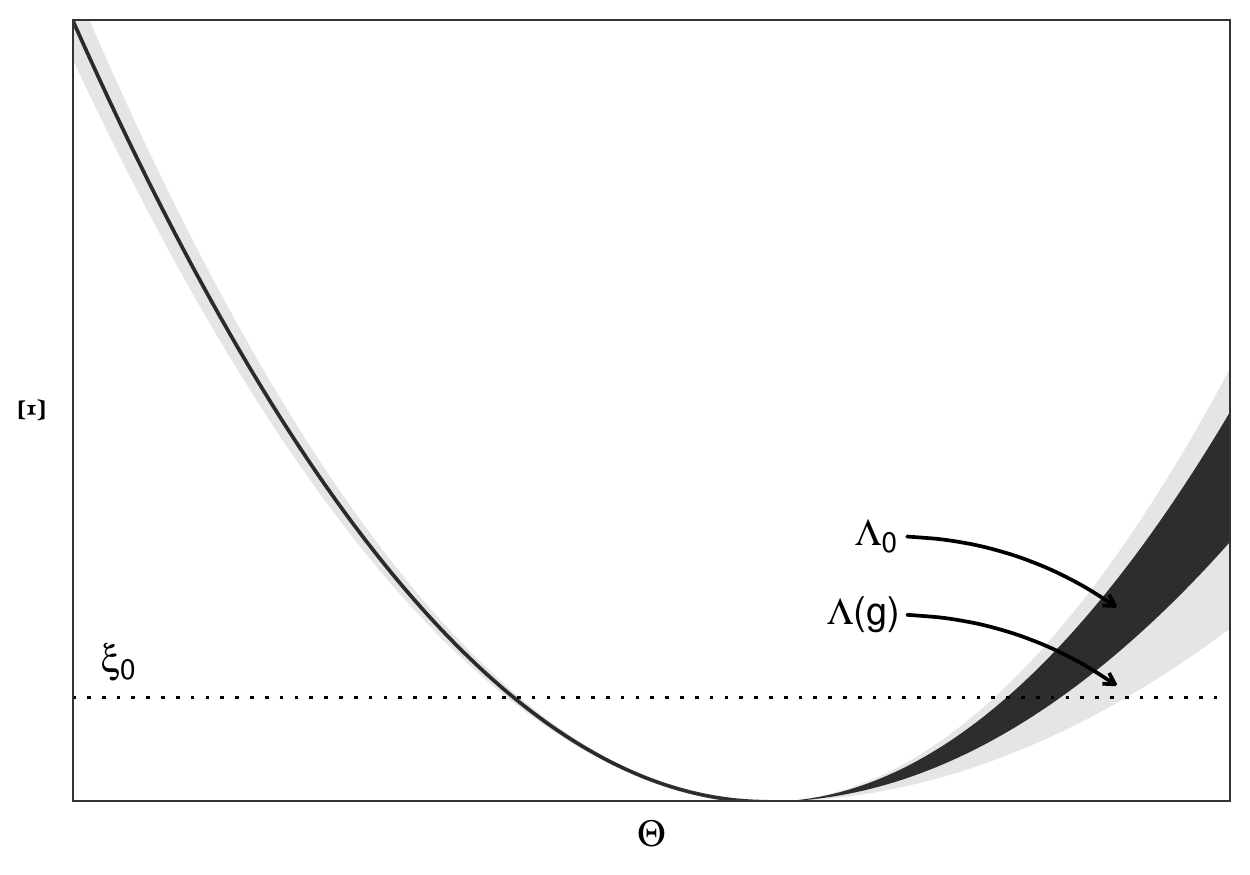}
		\caption{}

		\label{fig:id2}
		\end{subfigure}
	\caption{(a) Illustration of mappings between the statistical model $\mathcal{M}$, the estimand space $\Theta$, and the observable information set $\Xi$. $\Theta$ and $\Xi$ are images of $\mathcal{M}$ under functions $\Psi$ and $\Phi$, respectively. (b) Illustration of an identification strategy $g$ which is not sharp and partially identifies the estimand. Black area: $\Lambda_0$; Grey area: $\Lambda(g)$.} 
	\label{fig:id}
\end{figure}

%%%%%%%%%%%%%%%%%%%%%%%%%%%%%%%%%%%%%%%%

\section{Identification assumptions}
\label{sec:assump}

In this section,  we introduce causal identification assumptions that place restrictions on the statistical model $\mathcal{M}$ for the causal DGP of continuous-time processes. Broadly, the assumptions generalize standard assumptions from the discrete-time g-methods literature: sequential exchangeability, sequential consistency, sequential positivity, and structural models \citep{hernan2020july}.

\subsection{No Information Drift (NID)}

% \commentjs{In fact, there is no clear definition of confounder in continuous-time, I suggest using this assumption as a definition.}

In observational studies, when there is unmeasured confounding, the assignment of treatment at time $t$ depends not only on the observable information before time $t$, i.e. $\fF_{t-}$, but may also depend on  information about the potential outcome in the future (e.g. $\sigma(\rY^{\bar{\mathbf{0}}}_T)$). For causal identification, we need a ``no unmeasured confounding''-type condition. The following condition is a continuous-time generalization of the ``sequential exchangeability'' condition in the discrete-time g methods literature \citep[Chap.~19]{hernan2020july}. Recall that the $\FF$-canonical decomposition of $W$ is $W_t = W_0 + M^W_t + A^W_t$, where $M^W$ is the local martingale part, and $A^W$ is the compensator.
%{How to compute $M^W$. Is fitting an SDE enough? But this is usually for decomposition w.r.t $F^B$.}

\begin{assum}[No Information Drift (NID)]
	\label{assum:NID}
	%$W$ is also a $\mathbb{G}$-special semimartingale. 
	The canonical decompositions of the treatment process $W$ with respect to $\mathbb{G}$ and $\mathbb{F}$ are the same.
\end{assum}

Assumption \ref{assum:NID} indicates an \emph{invariance} under a \emph{change of filtration}. It is equivalent to the statement that $M^W$ is both an $\mathbb{F}$- and a $\mathbb{G}$-local martingale, since if $A^W$ is an $\mathbb{F}$-predictable finite variation process, it is also a predictable finite variation process with respect to $\mathbb{G}$. 

\emph{Information drift} is a well-known concept in the stochastic analysis literature \citep{ankirchner2006shannon}.
Generally, under an enlargement of filtration, $M^W$ may not be a $\mathbb{G}$-local martingale, or even a $\mathbb{G}$-semimartingale. When $W$ is a real-valued continuous semimartingale and $M^W$ is a $\mathbb{G}$-semimartingale, \citet{ankirchner2006shannon} shows that 
\[M^W = \tilde{M}^W + \int\alpha_s\dx{[M^W]_s},\]
where $\tilde{M}^W$ is a $\mathbb{G}$-local martingale, $[M^W]_s$ is the quadratic variation process of $M^W$, and $\alpha_s$ is a unique (up to indistinguishability) $\mathbb{G}$-predictable process. The integrand $\alpha_s$ is often called the \emph{information drift} of the expansion from $\mathbb{F}$ to $\mathbb{G}$. It reflects how the additional information contained in $\mathbb{G}$ but not in $\mathbb{F}$ changes the systematic behavior of $M^W$. In our setting, $\alpha_s$ reflects how \emph{the asymmetric information} related to the potential outcome $\sigma(\rY^{\bar{\mathbf{0}}}_T)$ that treatment decision-makers use (consciously or unconsciously) but the observational data does not include, will influence the assignment mechanism of treatment. This translates the concept of unmeasured confounding to the concept of asymmetric information.  Assumption \ref{assum:NID} implies that 
\[\alpha_s \equiv 0, \forall s,\] 
which means no information drift and implies no unmeasured confounding. 
%This view also alludes to how sensitivity analyses could be done for this assumption. %, which is unverifiable in observational studies.

% \commentjs{I do not think the interpretation here is true -- This is to say, $\E[M_t|F_s] = \E[M_t|G_s] = \E[M_t|F_s \bigvee \sigma^*]$. So, this is indeed mean independence. Implies the systematic part of the treatment process will be the same. This is different from immersion, since only $M$ is required.}

When $T_W$ is a time-to-event treatment, e.g. treatment initiation or termination, and the treatment process is defined as $W = \indicator{t \ge T_W}$, the No Unmeasured Confounding conditions in existing continuous-time causal inference literature (e.g. \citet{zhang2011causal,hu2019causal,yang2021semiparametric2}) are special cases of Assumption \ref{assum:NID}, using decompositions of random times.

Based on Assumption \ref{assum:NID}, we formally propose a definition of ``confounders'' for trajectories below, in the sense that these covariates form a ``sufficient adjustment set'' rendering NID satisfied.
\begin{defn}
	For the treatment process $W$, and the outcome process $Y$, a set of covariate processes $Z$ is called \emph{confounders}, if the canonical decompositions of $W$ under the filtration $\left(\fF^{W,Y,Z}_t\right)$ and the filtration $\left( \fF^{W,Y,Z}_t \bigvee \sigma(\rY_T^{\bar{\mathbf{0}}}) \right)$ are the same.
\end{defn}

Next, we verify Assumption \ref{assum:NID} for Example 1-3. These concrete examples show the connections to the discrete-time setting, and provide insights into continuous-time confounding.
% \subsubsection{Example 1 (continued)}
We will show that discrete-time sequential exchangeability (Assumption \ref{assum:DTSE}) is a special case of no information drift (Assumption \ref{assum:NID}). However, no information drift does not imply discrete-time sequential exchangeability unless $M^W$ is both an $\FF$- and $\FG$-martingale instead of a local martingale. Similarly, we verify that there is no information drift in DGPs in Examples 2 and 3. We formally state the results in Proposition \ref{prop:eg1}, \ref{prop:eg2}, and \ref{prop:eg3}. Proofs are presented in Appendix A. 

\begin{prop}[NID for Example 1]
	Using notations in Section \textup{\ref{subsec:eg1}},\\
	% \footnote{The result will also hold if the treatment of interest $\bar{\mathbf{0}}$ in Assumption \ref{assum:DTSE} is replaced by an arbitrary discrete-time treatment plan $\rw_J$ correspondingly.} \\
	(i) Assumption \textup{\ref{assum:DTSE}} implies Assumption \textup{\ref{assum:NID}}. \\
	(ii) If $M^W$ is both an $\FF$- and $\FG$-martingale, then Assumption \textup{\ref{assum:DTSE}} holds.
	\label{prop:eg1}
\end{prop}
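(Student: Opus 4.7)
The plan is to reduce both directions to the uniqueness of the discrete-time Doob decomposition, then translate to continuous time via the step-function embedding.

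For part (i), I first recall that for the integrable discrete-time process $W'$ adapted to $\FF'$, the unique Doob decomposition reads $W'_k = W'_0 + M'_k + A'_k$, where $A'$ is $\FF'$-predictable with increments $A'_k - A'_{k-1} = \E[W'_k \mid \fF'_{k-1}] - W'_{k-1}$. The continuous-time step-function versions $M^W_t = M'_{\lfloor Jt/T \rfloor}$ and $A^W_t = A'_{\lfloor Jt/T \rfloor}$ constitute the $\FF$-canonical decomposition of $W$: $A^W$ is piecewise constant with jumps at $t_k$ that are $\fF_{t_k-}$-measurable (hence $\FF$-predictable), and $M^W$ inherits the martingale property. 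Next I perform the analogous Doob decomposition under the enlarged filtration $\FG' = (\fG'_k)$ with $\fG'_k = \fF'_k \bigvee \sigma(\rY_J^{\prime\bbzero'})$. Its predictable part has increments $\E[W'_k \mid \fG'_{k-1}] - W'_{k-1}$. Assumption \ref{assum:DTSE} says precisely that $\E[W'_k \mid \fG'_{k-1}] = \E[W'_k \mid \fF'_{k-1}]$, so the $\FG'$-Doob predictable part coincides with $A'$, and by uniqueness its martingale part coincides with $M'$. Passing back to continuous time, this shows that the $\FG$-canonical decomposition of $W$ equals its $\FF$-canonical decomposition, which is exactly Assumption \ref{assum:NID}.

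For part (ii), I reverse the argument. Under the hypothesis that $M^W$ is a martingale under both $\FF$ and $\FG$, evaluating at grid points gives $\E[M'_k - M'_{k-1} \mid \fG'_{k-1}] = 0$. Since $A'_k - A'_{k-1}$ is $\fF'_{k-1}$-measurable by $\FF'$-predictability of the $\FF$-compensator, it is also $\fG'_{k-1}$-measurable. Applying $\E[\,\cdot\mid\fG'_{k-1}]$ to the identity $W'_k = W'_{k-1} + (M'_k - M'_{k-1}) + (A'_k - A'_{k-1})$ then yields
\[
\E[W'_k \mid \fG'_{k-1}] - W'_{k-1} \;=\; A'_k - A'_{k-1} \;=\; \E[W'_k \mid \fF'_{k-1}] - W'_{k-1},
\]
which is Assumption \ref{assum:DTSE} (after recalling that $\fG'_{k-1} = \sigma(\rW'_{k-1},\rX'_{k-1}, \rY_J^{\prime\bbzero'})$).

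The main obstacle is mostly bookkeeping: correctly identifying that for a discrete-time integrable process embedded in continuous time as a step function, the canonical special-semimartingale decomposition is exactly the continuous-time extension of the Doob decomposition, and explaining why the ``local martingale'' wording in the definition of NID is not a complication here. This is where the martingale hypothesis in part (ii) matters, and why the converse needs a stronger assumption than merely being a local martingale under $\FG$; without it, one can only take conditional expectations against a localizing sequence of stopping times, and recovering DTSE pointwise requires the genuine martingale property.
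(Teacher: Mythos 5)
Your proof is correct, and both parts follow the same basic route as the paper's: pass to the discrete-time representation, observe that DTSE equates $\E[W'_k\mid\fG'_{k-1}]$ with $\E[W'_k\mid\fF'_{k-1}]$, and compare the two decompositions. The genuine difference is in part (i). The paper never treats $M^{W'}$ as a true martingale: it takes a localizing sequence of integer-valued $\FF'$-stopping times $\tau_i\uparrow J$, shows by splitting on $\{\tau_i\le k-1\}$ and $\{\tau_i\ge k\}$ that each stopped process $(M^{W'})^{\tau_i}$ remains a $\FG'$-martingale, and concludes that $M^{W'}$ is a $\FG'$-local martingale, which together with the $\FG'$-predictability of $A^{W'}$ and uniqueness of the canonical decomposition gives NID. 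You bypass the localization entirely by identifying the canonical decomposition with the Doob decomposition and invoking uniqueness of the latter under both filtrations. That identification requires the increments $W'_k-W'_{k-1}$ to be integrable, so that the Doob martingale part is a genuine martingale; this is legitimate here because Example 1 explicitly imposes $\E\left[\sum_{1\le i\le J}|W'_i-W'_{i-1}|\right]<\infty$ (that condition is what makes $W$ an $\FF$-special semimartingale there in the first place), so your argument is sound within the setting of the proposition. What the paper's localization buys is that it leans only on the conditional-mean statement in DTSE and not on that integrability, so it covers the case where the martingale part is merely a local martingale. Your part (ii) is essentially identical to the paper's --- apply $\E[\,\cdot\mid\fG'_{k-1}]$ and $\E[\,\cdot\mid\fF'_{k-1}]$ to the identity $W'_k=W'_{k-1}+(M'_k-M'_{k-1})+(A'_k-A'_{k-1})$ and use that $A'_k-A'_{k-1}$ is $\fF'_{k-1}$-measurable --- and your closing observation about why a true-martingale hypothesis is indispensable in the converse matches the paper's reason for stating (ii) with martingales rather than local martingales.
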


% \subsubsection{Example 2 and 3 (continued)}
\begin{prop}[NID for Example 2]
	Using notations in Section \textup{\ref{subsec:eg2}}, the canonical decompositions of $W_t$ with respect to $\mathbb{F}$ and $\mathbb{G}$ are the same. 
	% \footnote{Note that even the confounder $B_1$ is unmeasured, measuring $Y, Z$ suffices to adjust for confounding.}
	\label{prop:eg2}
\end{prop}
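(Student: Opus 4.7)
The plan is to exhibit a single candidate decomposition of $W$ read off the SDE, verify it is the $\mathbb{F}$-canonical decomposition, and then check that the same decomposition satisfies the conditions for being the $\mathbb{G}$-canonical decomposition; uniqueness of the canonical decomposition of a special semimartingale does the rest. From the second coordinate of \eqref{eq:3dOU},
\[
W_t = W_0 - \int_0^t(\beta_{21}Y_s + \beta_{22}W_s + \beta_{23}Z_s)\,\dx{s} + \sigma_{22} B^2_t,
\]
so the natural candidates are $A^W_t = -\int_0^t(\beta_{21}Y_s + \beta_{22}W_s + \beta_{23}Z_s)\,\dx{s}$ and $M^W_t = \sigma_{22} B^2_t$. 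The process $A^W$ is continuous and $\mathbb{F}$-adapted, hence continuous and $\mathbb{F}$-predictable (and, since $\mathbb{F}\subseteq \mathbb{G}$, also $\mathbb{G}$-predictable) and of finite variation. What remains is to show $M^W$ is a local martingale with respect to both $\mathbb{F}$ and $\mathbb{G}$.

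For the $\mathbb{F}$-side I would argue that $B^2_t = \sigma_{22}^{-1}(W_t - W_0 - \int_0^t \text{(drift)}\,\dx{s})$ is $\mathbb{F}$-adapted, and then for $s \le t$, because $B^2$ is a Brownian motion under the larger filtration $\mathbb{H} \supseteq \mathbb{F}$, the tower property gives $\E[B^2_t - B^2_s \mid \fF_s] = \E\bigl[\E[B^2_t - B^2_s \mid \fH_s] \mid \fF_s\bigr] = 0$.

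The substantive step is the $\mathbb{G}$-side, which is where the zero pattern of $\boldsymbol{\sigma}$ earns its keep. I would observe that the counterfactual trajectory $(\rY_T^{\bar{\boldsymbol{0}}}, \rZ_T^{\bar{\boldsymbol{0}}})$ is a measurable functional of $(Y_0, Z_0, B^1)$ alone: setting $w \equiv 0$ in \eqref{eq:ou_po} eliminates $w$ from the drifts, and the second column of $\boldsymbol{\sigma}$ vanishes in rows $1$ and $3$, so the driving noise of the potential-outcome SDE is carried exclusively by $B^1$. Consequently
\[
\fG_s \subseteq \sigma\!\bigl(Y_0, W_0, Z_0,\, B^1,\, B^2_v: v \le s\bigr).
\]
The increment $B^2_t - B^2_s$ is independent of the right-hand side because $B^1$ and $B^2$ are independent Brownian motions and the initial conditions $(Y_0, W_0, Z_0)$ are independent of $(B^1, B^2)$. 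Hence $\E[B^2_t - B^2_s \mid \fG_s] = 0$, giving $\mathbb{G}$-martingality of $M^W$; uniqueness of canonical decompositions under $\mathbb{G}$ then finishes the argument.

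I expect the only real obstacle is making the inclusion $\fG_s \subseteq \sigma(Y_0, W_0, Z_0, B^1, B^2_v : v\le s)$ fully rigorous. This requires (i) the strong-solution functional representation of $(\rY_T^{\bar{\boldsymbol{0}}}, \rZ_T^{\bar{\boldsymbol{0}}})$ as a Borel functional of $(Y_0, Z_0, B^1)$, and (ii) the analogous representation of the observable triple $(Y, W, Z)$ on $[0,s]$ as a Borel functional of the initial state and $(B^1_v, B^2_v)_{v \le s}$. Both follow from pathwise uniqueness for the linear SDE but should be invoked explicitly; after that, the independence calculation and the appeal to uniqueness of canonical decompositions are routine.
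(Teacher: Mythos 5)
Your proof is correct and takes essentially the same route as the paper's: the same candidate decomposition $M^W_t = \sigma_{22}B^2_t$, $A^W_t = -\int_0^t(\beta_{21}Y_s+\beta_{22}W_s+\beta_{23}Z_s)\dx{s}$ read off the SDE, the same key structural observation that the strong-solution property plus the zero pattern of $\boldsymbol{\sigma}$ make $\rY_T^{\bar{\boldsymbol{0}}}$ measurable with respect to $\mathcal{N} \bigvee \sigma(Y_0,Z_0) \bigvee \fF_T^{B^1}$, and the same independence argument showing $B^2$ stays a martingale under $\mathbb{G}$ (the paper runs it as a tower-property computation through the enveloping $\sigma$-field $\mathcal{N} \bigvee \sigma(Y_0,W_0,Z_0)\bigvee \fF_T^{B^1}\bigvee \fF_s^{B^2}$, whereas you phrase it via independence of the increment $B^2_t - B^2_s$ from that $\sigma$-field, which is the same calculation). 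The only minor difference is that the paper proves the statement for an arbitrary treatment plan $\rw(T)$ rather than only $\bar{\boldsymbol{0}}$.
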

\begin{prop}[NID for Example 3]
	Using notations in Section \textup{\ref{subsec:eg3}}, the canonical decompositions of $W_t$ with respect to $\mathbb{F}$ and $\mathbb{G}$ are the same.
	\label{prop:eg3}
\end{prop}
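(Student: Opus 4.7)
My plan is to exhibit the $\FF$-canonical decomposition of $W$ explicitly and then verify that the same pair $(M^W, A^W)$ also satisfies the defining properties of the $\FG$-canonical decomposition; by uniqueness for special semimartingales this will force the two decompositions to coincide. The crucial preliminary observation is that under the zero treatment plan $w \equiv 0$ the SDE for $Z$ is unchanged, so $Z^{\bar{\boldsymbol{0}}} = Z$ and hence $Y^{\bar{\boldsymbol{0}}}_t = U^2_t = \indicator{t \ge \iota^2}$. Therefore the enlarging $\sigma$-algebra satisfies
\[ \mathcal{C} := \sigma(\rY^{\bar{\boldsymbol{0}}}_T) = \sigma(\iota^2 \wedge T) \subseteq \fF^B_T, \]
and carries no information about the Poisson process $N$. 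This structural separation between the $N$-driven treatment channel and the $B$-driven covariate channel is what will ultimately rule out information drift.

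For the $\FF$-canonical decomposition I would work inside the ambient filtration $\FH = \FF^{N,B} \vee \N$, on which $M^N_t := N_t - t$ is a martingale since $N$ is a standard Poisson process. From $W_t = \int_0^t U^1_{s-}(1-W_{s-})\, dN_s$ I would propose
\[ W_t = A^W_t + M^W_t, \qquad A^W_t := \int_0^t U^1_{s-}(1-W_{s-})\, ds, \qquad M^W_t := \int_0^t U^1_{s-}(1-W_{s-})\, dM^N_s. \]
The integrand is bounded by $1$ and $\FH$-predictable, so $M^W$ is a bounded $\FH$-martingale (bounded by $1+T$). Because $M^W$ is $\FF$-adapted and $\FF \subseteq \FH$, a tower-property argument upgrades it to an $\FF$-martingale. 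Since $A^W$ is continuous and $\FF$-adapted it is $\FF$-predictable, so this is indeed the $\FF$-canonical decomposition.

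To transfer this decomposition to $\FG$ I would introduce the initially enlarged auxiliary filtration $\tilde{\FH}_t := \fF^N_t \vee \fF^B_T \vee \N$, in which the entire trajectory of $B$ is revealed at time $0$. Because $N \indep B$ we have $\fF^N_\infty$ independent of $\fF^B_T$, which is exactly the hypothesis under which the immersion (H-) property holds: every $\FF^N$-martingale remains a $\tilde{\FH}$-martingale. In particular $M^N$ remains a $\tilde{\FH}$-martingale, and since the integrand for $M^W$ is still bounded and $\tilde{\FH}$-predictable, $M^W$ remains a $\tilde{\FH}$-martingale. The inclusions $\mathcal{C} \subseteq \fF^B_T$ and $\fF_t \subseteq \fF^{N,B}_t$ give $\fG_t \subseteq \tilde{\fH}_t$ for every $t$, and $M^W$ is $\FG$-adapted, so a second tower step shows $M^W$ is a $\FG$-martingale. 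Predictability of $A^W$ is inherited from $\FF$ since $\FF \subseteq \FG$, and uniqueness of the canonical decomposition closes the argument. The main obstacle I anticipate is a careful justification of the immersion step: one must verify that the \emph{initial} enlargement of $\FF^N$ by the future-looking $\sigma$-algebra $\fF^B_T$ introduces no drift in $M^N$. This rests entirely on $N \indep B$ and reduces to showing $\E\!\left[(M^N_t - M^N_s)\indicator{A \cap C}\right] = 0$ for $A \in \fF^N_s$ and $C \in \fF^B_T$, which factors by independence---a point worth stating explicitly, since without the structural condition separating the noises of $N$ and $B$ the argument would fail.
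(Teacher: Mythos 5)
Your proof is correct and takes essentially the same route as the paper's: the identical explicit $\FF$-decomposition of $W$, followed by a sandwich argument $\fF_t \subseteq \fG_t \subseteq$ (auxiliary enlarged filtration) in which the independence of $N$ and $B$ keeps the compensated Poisson integral a martingale, then the tower property and uniqueness of the canonical decomposition. The only cosmetic difference is that you enlarge by the full $\sigma$-algebra $\fF^B_T$ (an initial enlargement), whereas the paper enlarges $\fH_t$ only by $\sigma(\iota^2)$; both steps rest on exactly the same independence factorization, which you spell out more explicitly than the paper does.
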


% The key condition for NID to hold is how the driving noises affect the treatment and outcome processes, e.g. through the structure of 
% \[\boldsymbol{\sigma}= \begin{pmatrix}
%   &\sigma_{11}, &0\\
%   &0, &\sigma_{22} \\
%   &\sigma_{31}, &0
% \end{pmatrix},\]
% which ensures that $\sigma(\rY_T^{\bar{\boldsymbol{0}}})$ does not contain information about the future values of the $\FF$-compensator of $W$. 
	
% The local independence assumption never uses potential outcomes, so it is hard to compare.
% By B.V. Rao, the stochastic integral for eg2 and eg3 are the same wrt two nested filtrations. Thus, the unique solution wrt the smaller one is also the unique solution wrt the larger one. Thus we can say that the strong solution wrt the potential outcome process SDE is the same wrt two filtrations.

%%%%%%%%%%%%%%

\subsection{Continuous-Time Consistency Assumption (CTC)}

Generalizing discrete-time sequential consistency assumption\citep[Chap.~19]{hernan2020july}, we make the following assumption for the continuous-time case to link the potential outcomes to observable outcomes.

\begin{assum}[Continuous-Time Consistency (CTC)]
	\label{assum:CTC}
	For all treatment trajectories $\rw(T)$, if $\rW_{t} = \rw_{t}$ for $t \in (0, T]$, then we observe 
	$\rY_t = \rY^{\rw(T)}_{t}$  and $ \bar{Z}_t = \bar{Z}^{\rw(T)}_{t}$. Furthermore, $Y^{\rw(T)}_{0} = Y_{0}$ and $Z^{\rw(T)}_{0} = Z_{0}$.
\end{assum}

Assumption \ref{assum:CTC} requires that there is no interference between individuals, and there are no multiple versions of the treatment. Examples 1-3 clearly satisfy this assumption.
% Since future treatment has no effects on past outcomes, initial values of $Y^{\rw(T)},Z^{\rw(T)}$ should be the same for all treatment plans. 

%%%%%%%%%%%%%%%%%%%%%%%%

\subsection{Causal Structural Model (CSM)}

For a treatment plan $\rw_T$, define $\tau_t(\rw_T) \equiv Y^{\rw_T}_t - Y^{\bar{\mathbf{0}}}_t$ as the individual treatment effect of $\rw_T$ at time $t$. By Assumption \ref{assum:CTC}, we have $\tau_0(\rw_T) \equiv 0$. We make the following causal structural assumption on $\tau_t(\rw_T)$, which depicts the relationship between a specified intervention and the potential outcome. Note that the baseline potential outcome process $Y^{\bar{\mathbf{0}}}_t$ is left unrestricted.

\begin{assum}[Causal Structural Model (CSM)]
	The treatment effect $\tau_t(\rw_T)$ is $\FF$-adapted and is known up to the causal parameter $\gamma \in \Gamma$, i.e. $\tau_t(\rw_T) = \tau_t(\rw_T;\gamma)$.
	\label{assum:CSM}
\end{assum}

For example, the treatment effect might be represented by a function of the cumulative treatment delivered, e.g.  $\tau_t(\rw_T;\gamma) = \int_0^t \gamma(s,t)w_s \dx{s}$, where the unknown function $\gamma$ describes how the the effects of prior treatments accrue. Another example for a counting process outcome is $\tau_t(\rw_T;\gamma) = \int_0^t \gamma(s,t)w_s \dx{N_s}$, where the counting process $N_s$ is adapted to $\FF$ and is known from the observable data. Note that $\tau_t(\rw_T;\gamma)$ can be different among subjects, and $\Gamma$ can have finite dimensions (e.g. Euclidean spaces) or infinite dimensions (e.g. function spaces).
%The parameter space $\Gamma$ can be infinite-dimensional, however, in this paper, we focus on the case where $\Gamma$ is a finite-dimensional space.

Suppose that the filtration $\FH = (\fH_t)$ records all the relevant information for the causal system and thus $\fF_t \subseteq \fH_t$. Then in general, $\tau_t(\rw_T)$ is $\FH$-adapted, but not necessarily $\FF$-adapted. However, in many cases, Assumption \ref{assum:CSM} may be satisfied. For example, the unobserved information may cancel out when we subtract $Y^{\bar{\mathbf{0}}}_t$ from $Y^{\rw_T}_t$. 
We verify Assumption \ref{assum:CSM} for Examples 1-3.
% require it to be F adapted, we wish some unmeasured information will not affect this causal effect process. Like that they cancel each other, or the model is simple enough

\paragraph{Example 1 (Continued)}
By Assumption \ref{assum:DTCSM}, $Y^{\prime\rw^{\prime}_{J-1}}_k = \eta_0 + \eta_1\frac{T}{J}\sum_{i=0}^{k-1}w^{\prime}_i + f(Z'_{k-1})  + \epsilon_k$, and $Y^{\prime\rw^{\prime}_{J-1}}_k - Y^{\prime\bbzero'}_k =  \eta_1\frac{T}{J}\sum_{i=0}^{k-1}w^{\prime}_i$. Then the corresponding continuous-time representation of the treatment effect becomes
\begin{equation}
	\tau_t(\rw) = \eta_1\int_0^{\floor*{\frac{Jt}{T}}\frac{T}{J}} w_s\dx{s},
	\label{eq:eg1_csm}
\end{equation}
which is deterministic and thus adapted to $\FF$, and parameterized by a 1-dimensional $\gamma = \eta_1$.

\paragraph{Example 2 (Continued)}
Define the causal estimand as $\tau_t(\rw_T)\equiv \tau_t^Y(\rw_T)   \equiv Y^{\rw_T}_t - Y^{\bar{\mathbf{0}}}_t$, and for convenience define $\tau_t^Z(\rw_T) \equiv Z^{\rw_T}_t - Z^{\bar{\mathbf{0}}}_t$. 
Then by \citet{gardiner1985handbook}, the solution to Equation \eqref{eq:ou_po} is 
\begin{equation}
	\begin{pmatrix*} \tau_t^Y(\rw_T)\\ \tau_t^Z(\rw_T) \end{pmatrix*} = \int_0^t e^{\left(\begin{smallmatrix*} \beta_{11} & \beta_{13}\\ \beta_{31} & \beta_{33} \end{smallmatrix*}\right)(s-t) } \begin{pmatrix*} -\beta_{12} w_s \\ -\beta_{32} w_s\end{pmatrix*} \dx{s},
	% \tau_0^Y(\rw_T) = \tau_0^Z(\rw_T) = 0
	\label{eq:eg2_csm}
\end{equation}
where the first term of the integrand is a matrix exponential. Equation \eqref{eq:eg2_csm} is deterministic, and thus $\tau_t^Y(\rw_T)$ is $\FF$-adapted. The $\tau_t^Y$ and $\tau_t^Z$ are parameterized by $\gamma=(\beta_{11}, \beta_{12}, \beta_{13}, \beta_{31}, \beta_{32}, \beta_{33})$.

\paragraph{Example 3 (Continued)}
Since $Z$ is not influenced by $W$, we have $U^{2, \rw}_s \equiv U^2_s$. Then it is easy to see that 
\begin{equation}
	\tau_t(\rw_T) = Y_t^{\rw_T} - Y_t^{\bbzero} = -\int_0^t w_{s-}\dx{U^2_s}.
	\label{eq:eg3_csm}
\end{equation} 
Since $U^2$ is knowable from observable data and is $\FF$-adapted, then $\tau_t(\rw_T)$ is $\FF$-adapted, and is parameterized by $\gamma = (\alpha_0, \alpha_2)$.

%%%%%%%%%%%%%%%%%%%%%%%%%%

\subsection{Continuous-Time Positivity (CTP)}

In discrete time, the sequential positivity condition\citep[Chap.~19]{hernan2020july} is needed so that sufficient randomness exists in the treatment assignment at each time point after conditioning on the past.  As mentioned above, $M^W$ is the ``noisy'' component of the treatment. Thus, we extend the sequential positivity condition to the continuous-time setting by imposing the following condition on $M^W$. 

\begin{assum}[Continuous-Time Positivity A (CTP-A)] 
	The local martingale $M^W$ is not constant zero.
	\label{assum:CTP-A}
\end{assum}

Assumption \ref{assum:CTP-A} is equivalent to requiring that $W$ or $M^W$ is not an $\FF$-predictable finite variation process. This assumption will be violated, for example, if $W_t$ has continuously differentiable sample paths. In real-life applications, inevitable noises in the treatment assignment often render the sample paths ``rough'' and thus ensure the satisfaction of Assumption \ref{assum:CTP-A}. Examples 1-3 clearly satisfy this assumption because the treatments are not $\FF$-predictable finite variation processes.
% like $W_t = \sin(Ut)$, where $U$ is a baseline random variable at $t = 0$.

%%%%%%%%%%%%%%%%%%%%%%%%%%%%

\section{Identification results}
\label{sec:id}

We will first present our main identification strategies through orthogonalization, and then explain the connection to the weighting approach.

\subsection{Main identification strategy}
\label{subsec:main}

Suppose the statistical model $\mathcal{M}$ satisfies Assumptions \ref{assum:setting} to \ref{assum:CTP-A}, and the true causal DGP $m_0$ is in $\mathcal{M}$. Recall that the probability space is fixed, and that the measurable functions $W,X,X^{\rw}$ depend on the DGP $m_0$, where we omit the explicit notation of this dependence for conciseness. Our observable information is $\xi_0 = \Phi(m_0) = \{(W, X)\} = \{(W(\omega, t), X(\omega, t)): \omega \in \Omega, t \in \mathcal{T}\}$, and our estimand of interest is $\theta_0 = \Psi(m_0) \equiv \E[Y_T^{\rw}]$. % The $m_0$ appearing in the subscript indicates that the random variables are generated by this DGP. 
We will develop a moment-based identification strategy $g$ such that $g\left(\Xi(m_0), \Psi(m_0)\right) = 0, \forall m_0 \in \mathcal{M}$. In fact, to identify $\E[Y_T^{\rw}]$, we only need to identify $\gamma_0$, the true causal parameters in $\tau_t(\rw_T;\gamma_0)$ under $m_0$. 

Recall that $M^W \equiv (M^{W^1}, \ldots, M^{W^q})$ is the $q$-dimensional local martingale part of the $\mathbb{F}$-canonical decomposition of $W_t$, and define $[X, Y]_s$ to be the quadratic covariation process of two generic processes $X, Y$. Define $[l] \equiv \{1,2, \ldots, l\}$ for an integer $l$. Define the $d$-dimensional process $H \equiv (H^1, \ldots, H^d): \Omega \times \mathcal{T} \times \Gamma \rightarrow \R^d$ with 
\begin{equation}
    H^i_t(\gamma) \equiv h^i\left(Y_T - \tau_T(\rW_T;\gamma), W_{t-}, X_{t-}, t\right)
    \label{eq:H}
\end{equation}
and $h_i$'s are Borel-measurable functions from $\R\times \R^q \times \R^p \times \R $ to $\R$ that can be arbitrarily chosen.
% For a \cadlag process $U_t$, define its left-limit process $U_{t-} = \lim_{s \rightarrow t, s<t}U_s$. 
% Define $\mathscr{P}$ as the space of $\FG$-predictable processes equipped with ? metric, and sigma-algebra? Define deterministic function $I_t = t$.
The following theorem establishes a wide range of identification strategies.

% \comment{Here, it might be helpful to start with notation that reminds readers of the general identification framework you introduced above, instead of showing it at the end of the result. Like, build up the specification for $g$, then state that it is an identification strategy. } 

\begin{thm}[Main]%[Identification]
    \label{thm:ID_main}
    Under Assumptions \textup{\ref{assum:setting}} to \textup{\ref{assum:CTP-A}}, suppose\\
    \mbox{}\hspace{0.5cm}(a) $A^W$ can be explicitly calculated with respect to $\FF$.\\
    \mbox{}\hspace{0.5cm}(b) $\E\left[\int_0^T (H^i_s)^2(\gamma_0) \dx{[M^{W^j}, M^{W^j}]_s}\right] < \infty, \forall i \in [d], j \in [q]$.\\
    Define an $\R^{d\times q}$-valued function
    \begin{equation*}
        \bmu(t;\gamma) \equiv \E\left[\int_0^t H_s(\gamma)\dx{M^{W}_s}\right] \equiv \E\left[\left(\int_0^t H_s^i(\gamma)\dx{M^{W^j}_s}\right)_{i \in [d]; j \in [q]}\right], t \in [0, T],
    \end{equation*}
    where the integral is in the \Ito sense with respect to the filtration $\mathbb{G}$.  
    Arrange elements of $\bmu(t;\gamma)$ into a $dq$-column vector, denoted as $\tbmu(t;\gamma)$. Let $V$ be an arbitrary $dq \times dq$ positive-definite weighting matrix. Then
    \begin{equation}
        g\left(\Phi(m_0), \gamma\right) \equiv \int_0^T\tbmu(t;\gamma)^\intercal V \tbmu(t;\gamma)\dx{t}
        \label{eq:extremum}
    \end{equation}
    is an identification strategy for $\gamma_0$ for all $m_0 \in \mathcal{M}$ in the sense of Definition \textup{\ref{defn:id}}. Equivalently,
    \begin{equation}
        \bmu(t;\gamma)= \mathbf{0}^{d\times q}, \forall t \in [0,T]
        \label{eq:population_ee}
    \end{equation}
    identifies $\gamma_0$.
\end{thm}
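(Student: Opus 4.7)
The plan is to verify in two steps that (i) the true parameter $\gamma_0$ zeros out the moment system, so $g(\Phi(m_0),\gamma_0)=0$, and (ii) no other $\gamma\in\Gamma$ does. For the validity half, the goal is to show $\bmu(t;\gamma_0)=\mathbf{0}^{d\times q}$ for every $t\in[0,T]$. Applying Assumption~\ref{assum:CSM} with the observed treatment as the plan of interest gives $\tau_T(\rW_T;\gamma_0)=Y_T^{\rW_T}-Y_T^{\bar{\mathbf{0}}}$; combining this with Assumption~\ref{assum:CTC} at $\rw_T=\rW_T$ yields $Y_T-\tau_T(\rW_T;\gamma_0)=Y_T^{\bar{\mathbf{0}}}$, which is $\mathcal{G}_0$-measurable by construction of $\mathbb{G}$. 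Consequently each $H^i_s(\gamma_0)=h^i(Y_T^{\bar{\mathbf{0}}},W_{s-},X_{s-},s)$ is a Borel function of a $\mathcal{G}_0$-measurable random variable and $\mathbb{G}$-adapted left-continuous processes, hence $\mathbb{G}$-predictable. By Assumption~\ref{assum:NID}, $M^W$ is a $\mathbb{G}$-local martingale with the same canonical decomposition as under $\mathbb{F}$, so $t\mapsto\int_0^t H^i_s(\gamma_0)\,dM^{W^j}_s$ is a $\mathbb{G}$-local martingale starting at zero; hypothesis (b) together with the Burkholder--Davis--Gundy inequality promotes it to a true $\mathbb{G}$-martingale. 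Taking expectations yields $\mu_{ij}(t;\gamma_0)=0$ uniformly in $i,j,t$, and hence $g(\Phi(m_0),\gamma_0)=0$.

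For the uniqueness half, suppose $\bmu(t;\gamma)=\mathbf{0}$ for all $t\in[0,T]$, and write $\Delta\tau(\gamma)\equiv\tau_T(\rW_T;\gamma_0)-\tau_T(\rW_T;\gamma)$, which is $\mathcal{F}_T$-measurable by Assumption~\ref{assum:CSM}. The decomposition $Y_T-\tau_T(\rW_T;\gamma)=Y_T^{\bar{\mathbf{0}}}+\Delta\tau(\gamma)$ and subtraction of the validity identity from Part~1 then show the hypothesized moment equations are equivalent to
\[ \E\Big[\int_0^t \{h^i(Y_T^{\bar{\mathbf{0}}}+\Delta\tau(\gamma),W_{s-},X_{s-},s)-h^i(Y_T^{\bar{\mathbf{0}}},W_{s-},X_{s-},s)\}\,dM^{W^j}_s\Big]=0 \]
for every $t$, every $i,j$, and every admissible $h^i$. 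Letting $h^i$ range over a separating family on $\R\times\R^q\times\R^p\times\R$ (e.g.\ indicators of Borel rectangles), I would argue that the only way every such orthogonality can hold simultaneously is if the predictable dual projection onto $\mathbb{G}$ of the anticipative piece $\Delta\tau(\gamma)$ vanishes, i.e.\ if $\Delta\tau(\gamma)$ is itself $\mathcal{G}_0$-measurable. Since $\Delta\tau(\gamma)\in\mathcal{F}_T$ is a genuine function of the future trajectory of $\rW_T$ and Assumption~\ref{assum:CTP-A} rules out $M^W$ being degenerate, the relevant intersection is trivial and $\Delta\tau(\gamma)=0$ almost surely. Injectivity of $\gamma\mapsto\tau_T(\cdot;\gamma)$ in the structural model of Assumption~\ref{assum:CSM} then gives $\gamma=\gamma_0$. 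Equivalence of $g(\Phi(m_0),\gamma)=\int_0^T\tbmu^\intercal V\tbmu\,dt=0$ with $\tbmu(t;\gamma)\equiv\mathbf{0}$ on $[0,T]$ is immediate from positive-definiteness of $V$ together with right-continuity of $t\mapsto\bmu(t;\gamma)$.

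The validity half is a clean application of NID, CTC/CSM, and Itô martingale calculus, and hypothesis (a) plays no role in the abstract identification argument --- it is needed only to make the strategy operational for estimation. The hard part will be the uniqueness half, because for $\gamma\neq\gamma_0$ the integrand $H^i_s(\gamma)$ is anticipative in $Y_T$ via $\Delta\tau(\gamma)$, so the martingale cancellation used in Part~1 breaks down and $\int_0^t H^i_s(\gamma)\,dM^{W^j}_s$ is no longer an Itô integral of a $\mathbb{G}$-predictable process. Rigorous uniqueness will need either an initial enlargement of $\mathbb{G}$ by $\sigma(Y_T)$ with careful bookkeeping of the resulting information drift of $M^W$, or a Kunita--Watanabe-style orthogonal projection that separates the predictable and anticipative components of the integrand; both routes lean on CTP-A for the discriminatory power of the moment system, and on an (essentially implicit) identifiability postulate that $\tau_T(\cdot;\gamma)$ depends injectively on $\gamma$.
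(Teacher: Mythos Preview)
Your validity half is essentially the paper's proof: use CTC and CSM to rewrite $Y_T-\tau_T(\rW_T;\gamma_0)=Y_T^{\bar{\mathbf{0}}}$, observe this makes $H^i_s(\gamma_0)$ a Borel function of $\mathcal{G}_0$-measurable and left-continuous $\mathbb{G}$-adapted pieces (hence $\mathbb{G}$-predictable), invoke NID so that $M^{W^j}$ is a $\mathbb{G}$-local martingale, and use the integrability hypothesis~(b) to upgrade the stochastic integral to a true zero-mean martingale. The paper cites It\^o isometry rather than BDG, but that is cosmetic. Your remark that hypothesis~(a) is operational rather than logical is also in line with the paper.

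The gap is that you have misread what the theorem asserts. Re-examine Definition~\ref{defn:id}: an \emph{identification strategy} is any $g$ with $g(\Phi(m),\Psi(m))=0$ for all $m\in\mathcal{M}$; it need not point-identify. The paper is explicit that these moment conditions ``may point or partially identify'' $\gamma_0$ depending on the choice of $H$, and its proof accordingly establishes only that $\gamma_0$ solves $\bmu(t;\gamma)=\mathbf{0}$, nothing more. Your entire uniqueness half is therefore not part of the theorem, and the paper never attempts it.

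Beyond being unnecessary, your uniqueness sketch could not be completed under the stated assumptions. You yourself flag that for $\gamma\neq\gamma_0$ the integrand $H^i_s(\gamma)$ is anticipative in $\mathcal{F}_T$ and hence not $\mathbb{G}$-predictable, so the object $\int_0^t H^i_s(\gamma)\,dM^{W^j}_s$ is not an It\^o integral with respect to $\mathbb{G}$ at all; the theorem's $\bmu(t;\gamma)$ should be read as a functional of observables (computable once $A^W$ is known) rather than as a $\mathbb{G}$-stochastic integral for generic $\gamma$. Moreover, injectivity of $\gamma\mapsto\tau_T(\cdot;\gamma)$ is nowhere assumed in Assumption~\ref{assum:CSM}, and CTP-A merely says $M^W\not\equiv 0$, which is far too weak to force the separating-family argument you outline. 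Drop Part~2 and you have the paper's proof.
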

\begin{remark} 
    $h_i$ can be any continuous functions from $\R^{p+q+2}$ to $\R$. For example, \[H^i_t(\gamma) = \left[Y_T - \tau_T(\rW_T;\gamma)\right]^i\] using polynomials. It may often be helpful to incorporate covariate information in the moment conditions to increase efficiency, e.g. $H^i_t(\gamma) = Z_{t-}^j\left(Y_T - \tau_T(\rW_T;\gamma)\right)$, where $Z^j$ is an arbitrary component of the covariate process. 
\end{remark}
\begin{remark} 
    If $\Gamma$ has dim$(\Gamma)$ dimensions with dim$(\Gamma) < \infty$, then a finite number of moment conditions may suffice to point-identify $\gamma_0$. They can be formulated by applying Equation \ref{eq:extremum} or \ref{eq:population_ee} to $k$ arbitrary time points $t_1, \ldots, t_k$ such that $kdq \ge \text{dim}(\Gamma)$, e.g.
    \[\bmu(t_i;\gamma)= \mathbf{0}^{d\times q}, i \in [k].\]
\end{remark}
Intuitively, our identification strategy is based on a stochastic integral under filtration $\FG$. When $\gamma = \gamma_0$, the integrand of the integral is a $\FG$-predictable process that depends on the value of the potential outcome $Y_T^{\bar{\boldsymbol{0}}}$ in the future, which is by definition adapted to $\FG$. The integrator $M^W$ is both an $\FF$- and $\FG$-local martingale. Therefore, we can use it as an integrator when it is considered as a $\FG$-local martingale, and identify it from observable data when it is considered as an $\FF$-local martingale. Then, under weak regularity conditions, the $\FG$-stochastic integral is a zero-mean martingale. % The formal proof is given in Appendix A.

\begin{proof}[Proof of Theorem \ref{thm:ID_main}]
    Without loss of generality, we will prove that $\gamma_0$ is a solution to 
    \begin{equation}
        \bmu(T;\gamma)= \mathbf{0}^{d\times q}
    \end{equation}
     for all $m_0$ in $\mathcal{M}$. It follows the same argument to prove $\bmu(t;\gamma)= \mathbf{0}^{d\times q}$ for $t\ne T$.

    (1) We will first show that $H^i_t(\gamma_0)$ is a $\FG$-predictable process.

    By Assumption \ref{assum:CTC} and \ref{assum:CSM}, $Y_T^{\bar{\boldsymbol{0}}} = Y_T - \tau_T(\rW_T;\gamma_0)$.
    Define $\mathscr{P}$ to be the $\FG$-predictable $\sigma$-algebra on $\Omega \times \mathcal{T}$, which is generated by all continuous $\FG$-adapted processes. Then, obviously the constant process $Y_T^{\bar{\boldsymbol{0}}}$ is a $\FG$-predictable process. Since by Assumption \ref{assum:setting}, $W_{t}, X_{t}$ are $\FG$-adapted \cadlag processes, then their left-limit processes are left-continuous (i.e. c\`ag) processes, and thus are $\FG$-predictable processes. The deterministic function $I_t = t$ is clearly also $\FG$-predictable. Thus, since each $h^i$ is a Borel-measurable function, then 
    $h^i(Y_T^{\bar{\boldsymbol{0}}}(\omega), W_{t-}(\omega), X_{t-}(\omega), t)$ is also $\mathscr{P}$-measurable, and therefore is a $\FG$-predictable process.

    (2) We then show that $\int_0^T H_s^i(\gamma_0)\dx{M^{W^j}_s}$ with respect to filtration $\FG$ is a zero-mean martingale under $m_0$.

    By Assumption \ref{assum:setting}, \ref{assum:NID}, and \ref{assum:CTP-A}, $M^{W^j}$ is a nontrivial $\FF$- and $\FG$-local martingale with $M^{W^j}_0 = 0$, and it can be explicitly identified from observable data. Taking $H^i_t(\gamma_0)$ as the integrand and $M^{W^j}$ as the integrator,
    since $\E\left[\int_0^T (H^i_s)^2(\gamma_0) \dx{[M^{W^j}, M^{W^j}]_s}\right] < \infty$,
    by \Ito Isometry, the resulting stochastic integral is an $L^2$-bounded $\mathbb{G}$-martingale starting from zero. Thus, we have proved Equation \eqref{eq:population_ee}, and it can be easily extended to Equation \eqref{eq:extremum}.
\end{proof}

By the uniqueness of canonical decompositions, there is a function mapping $W$ to its compensator $A^W$, which establishes the existence of a nonparametric point-identification strategy for $A^W$. Thus, our identification is \emph{nonparametric} in nature, since $\Gamma$ can also be infinite-dimensional.
We additionally require that $A^W$ can be explicitly calculated. This is the case for any discrete-time adapted processes embedded in continuous time by definition (thus including Example 1), many counting processes \citep{aven1985theorem}, many processes specified by stochastic differential equations, and special semimartingales that satisfy certain regularity conditions, e.g. \citet{knight1991calculating}.

We show below applications of Theorem \ref{thm:ID_main} to Examples 1 and 3. See Section \ref{sec:simu} for a detailed application of Theorem \ref{thm:ID_main} to estimate causal parameters in Example 2.
% The following EE is not the same as SNM in DT, at least not the same in Stijin and Yang. Not sure whether Robins has ever show this variant in his early work.
\paragraph{Example 1 (Continued)}
By Equation \eqref{eq:eg1_csm} and Assumption \ref{assum:CTC}, 
\[ Y_T^{\bbzero} = Y_T - \tau_T(\rW_T;\eta_1) = Y_T - \eta_1\int_0^T W_s \dx{s}. \]
In the discrete-time representation, the $\FF'$-compensator of $W'$ can be explicitly written as $A^{W'}_k = \sum_{i=1}^k \E[W'_i - W'_{i-1}|\fF'_{i-1}]$, whose continuous-time representation $A^W_t = \sum_{i=1}^{\floor*{Jt/T}} \E[W'_i - W'_{i-1}|\fF'_{i-1}]$ is the $\FF$-compensator of $W$. Since we have verified that the DGP in Example 1 satisfies Assumption \ref{assum:setting} to \ref{assum:CTP-A}, thus we can apply Theorem \ref{thm:ID_main} with an arbitrary function $h$ in $\R$ and a weighting matrix $V$. We choose 
\[h\left(Y_T - \eta_1\int_0^T W_s \dx{s}, W_{t-}, X_{t-}, t\right) = Y_{t-}\left(Y_T - \eta_1\int_0^T W_s \dx{s}\right)\] 
and $V = 1$. Note that $\gamma = \eta_1$ and dim$(\Gamma) = 1$. Thus we use the fact that $\bmu(T;\eta_1)= {0}$ and get a valid identification strategy
\[g(\eta_1) = \left\{\E\left[\int_0^T Y_{t-}\left(Y_T - \eta_1\int_0^T W_s \dx{s}\right) \dx{(W_t - A^W_t)}\right] \right\}^2. \]
Since the integrator is a step function, after simplification we have the discrete-time representation as
\begin{equation*}
    g(\eta_1) = \left\{\E\left[ \sum_{i=1}^J Y'_{i-1}\left(Y_J^{\prime} - \eta_1\frac{T}{J}\sum_{l = 0}^{J-1}W_l'\right) \left\{W'_i - \E(W'_i|\rW'_{i-1}, \rX'_{i-1})\right\} \right]\right\}^2.
    %\label{eq:eg1_ee}
\end{equation*}
% \begin{equation}
%   g(\eta_1) = \left\{\E\left[ \sum_{i=1}^J h\left(Y_J^{\prime} - \eta_1\sum_{l = 0}^{J-1}W_l', W'_{i-1}, X'_{i-1}, \frac{iT}{J}\right)\left\{W'_i - \E(W'_i|\rW'_{i-1}, \rX'_{i-1})\right\} \right]\right\}^2.
%   \label{eq:eg1_ee}
% \end{equation}
The above function from the continuous-time strategy coincides with Equation \eqref{eq:eg1_f} in essence, which is derived under a discrete-time strategy.

\paragraph{Example 3 (Continued)}
By Equation \eqref{eq:eg3_csm} and Assumption \ref{assum:CTC}, 
\[ Y_T^{\bbzero}= Y_T - \tau_T(\rW_T; \alpha_0, \alpha_2) = Y_T + \int_0^T W_{s-}\dx{U^2_s(\alpha_0, \alpha_2)},\]
where $\gamma = (\alpha_0, \alpha_2)$ is the causal parameter.
In the proof of Proposition \ref{prop:eg3}, we have shown that the $\FF$-local martingale part of $W$ is 
\[ M^W_t =  \int_0^t U^1_{s-}(\alpha_0, \alpha_1)(1 - W_{s-})\dx{({N}_s} - s),\]
where $\alpha_1$ is a nuisance parameter. By choosing 
\[h^i\left(Y_T^{\bbzero}, W_{t-}, X_{t-}, t\right) = (Z_{t-})^iY_T^{\bbzero}, i = 1,2,3,\]
Theorem \ref{thm:ID_main} gives
\[\E\left[\int_0^T(Z_{s-})^i\left(Y_T + \int_0^T W_{s-}\dx{U^2_s(\alpha_0, \alpha_2)}\right)U^1_{s-}(\alpha_0, \alpha_1)(1 - W_{s-})\dx{({N}_s} - s)\right] = 0, i = 1,2,3.\]
Solving the above equations will identify causal parameters $\alpha_0,\alpha_2$. Equivalently, if we define $T_W$ to be the jump time of $W$, which is observable, and let $T_W = \infty$ if $W$ never jumps in $\mathcal{T}$, and define $a\wedge b \equiv \min(a,b)$, the above equations can be simplified as
\[\E\left[\left\{ Y_T + W_{\iota^2(\alpha_0, \alpha_2)-}\indicator{\iota^2(\alpha_0, \alpha_2)\le T} \right\}\left\{(Z_{T_W})^i \indicator{T_W\le T} - \int_{\iota^1(\alpha_0, \alpha_1)\wedge T}^{T_W\wedge T}(Z_t)^i\dx{t}  \right\}\right] = 0.\]
%\commentjs{Say: an optimization of the sample analog would give an estimate, and is omitted here.}

\subsection{An inverse weighting aspect of Theorem \ref{thm:ID_main}}
\label{subsec:weight}
Weighting methods are widely used in causal inference, e.g., Inverse Probability-of-Treatment Weighting (IPTW) in conjunction with marginal structural models \citep{robins2000marginal}, and balancing weights \citep{ben2021balancing}. We show that after straightforward manipulations of the integrator $\dx{M^W}$ in Theorem \ref{thm:ID_main}, we will have a weighting strategy. Intuitively, by using the \emph{Inverse Rate of change of the Treatment} $W$'s $\FF$-predictable part ${1}/{{a^{W^j}_s}}$ as weights (IRTW), we create a pseudo-population where subjects with different treatments are ``exchangeable". 
% Importantly, this version of identification reveals partly why Theorem \ref{thm:ID_main} works from a weighting perspective. % For example $\E[H dW]$ is not zero generally, but with a weight $\E[a^{-1} H dW]$ it equals zero. This weight makes the increment of W and the potential outcome uncorrelated.

We first state a new ``Positivity''-type assumption below. Special cases of it considering counting process treatments have appeared in existing continuous-time IPTW literature, e.g. \citet{hu2021joint}. This alternative ``Positivity'' assumption will replace Assumption \ref{assum:CTP-A} in the weighting identification strategy. Recall that $W \equiv (W^1, \ldots, W^q)$.

%This assumption has been used ct msm, e.g. in Hu 18, 19, 21, as a positivity assumption.

\begin{assum}[Continuous-Time Positivity B (CTP-B)] %[Factorization of Treatment $\FF$-Compensator]
    The $\FF$-compensator of $W$ admits the form $A^W_t = \int_0^t a^W_s \dx{s}$, where $a^W_s \equiv (a^{W^1}_s, \ldots, a^{W^q}_s)$ is an $\FF$-predictable process, and $a^{W^i}_s \ne 0, \forall i \in [q], \forall s \in \mathcal{T}$.
    \label{assum:CTP-B}
\end{assum}
%This assumption has been used ct msm, e.g. in Hu 18, 19, 21, as a positivity assumption.
CTP-B implies that we can find a well-behaved rate of change function of $a^{W^j}$. For instance, the intensity process of a counting process treatment could be positive for all $t$. One concrete example is that 
\[\dx{W_t} = \lambda_t\exp{(Z_{t-} + Y_{t-})}\dx{N_t},\] 
where $\lambda_t$ is a deterministic baseline function that is positive for all $t$, and $N_t$ is a standard Poisson driving process. Then, $a^W_t =\lambda_t\exp{(Z_{t-} + Y_{t-})} > 0$.
Note that CTP-B can be more restrictive than its alternative CTP-A (Assumption \ref{assum:CTP-A}). 
% In fact, CTP-B does not hold for Example 1, 2, 3. 
However, when it is satisfied, it allows an alternative identification strategy. 

We formally state our weighting strategy in Corollary \ref{cor:ID_weighting} below, where ${1}/{{a^{W^j}_s}}$ is used as weights. 
Define $H$ as in Equation \eqref{eq:H}. Define $C_t \equiv c\left(W_{t-}, X_{t-}, t\right)$ and $c$ is a real-valued Borel-measurable function which can be arbitrarily chosen.
For a generic jointly measurable process $\Upsilon$, define its $\FF^{C}$-predictable projection as $^{p}\Upsilon$, which is the unique $\FF^{C}$-predictable process such that ${}^{p}\Upsilon_\tau = \E[\Upsilon_{\tau}|\fF^{C}_{\tau-}]$ for all $\FF^{C}$-predictable times $\tau$.

\begin{cor}[Weighting]
    Under Assumptions \textup{\ref{assum:setting}}, \textup{\ref{assum:NID}}, \textup{\ref{assum:CTC}}, \textup{\ref{assum:CSM}}, and \textup{\ref{assum:CTP-B}}, suppose that for all $i \in [d], j \in [q]$,\\
    \mbox{}\hspace{0.5cm}(a) $a^W$ can be explicitly calculated,\\
    \mbox{}\hspace{0.5cm}(b) $\E\left[\int_0^T \left\{\frac{1}{{a^{W^j}_s}}\left(H^i_s(\gamma_0) - {}^{p}H^i_s(\gamma_0)\right)\right\}^2 \dx{[M^{W^j}, M^{W^j}]_s}\right] < \infty$,\\
    \mbox{}\hspace{0.5cm}(c) $U_t^{i,j} \equiv  \int_0^t \frac{1}{{a^{W^j}_s}}\left({H^i_s(\gamma_0) - {}^{p}H^i_s(\gamma_0)}\right)\dx{W^j_s}$ is locally integrable.\\
    % this condition is for decomposition. See AS Special semimar page.
    % and $dq \ge \text{dim}(\Gamma).$ 
    Define an $\R^{d\times q}$-valued function
    \begin{equation}
        \bmu(t;\gamma) \equiv \E\left[\left\{\int_0^t \frac{1}{{a^{W^j}_s}}\left({H^i_s(\gamma) - {}^{p}H^i_s(\gamma)}\right)\dx{W^j_s}\right\}_{i \in [d]; j \in [q]}\right], t \in [0, T],
        \label{eq:cor_ee}
    \end{equation}
    where the integral is in the \Ito sense with respect to the filtration $\mathbb{G}$.  
    Arrange elements of $\bmu(t;\gamma)$ into a $dq$-column vector, denoted as $\tbmu(t;\gamma)$. Let $V$ be an arbitrary $dq \times dq$ positive-definite weighting matrix. Then
    \begin{equation*}
        g\left(\Phi(m_0), \gamma\right) \equiv \int_0^T\tbmu(t;\gamma)^\intercal V \tbmu(t;\gamma)\dx{t}
    \end{equation*}
    is an identification strategy for $\gamma_0$ for all $m_0 \in \mathcal{M}$ in the sense of Definition \textup{\ref{defn:id}}. Equivalently,
    \begin{equation*}
        \bmu(t;\gamma)= \mathbf{0}^{d\times q}, \forall t \in [0,T]
    \end{equation*}
    identifies $\gamma_0$.
    \label{cor:ID_weighting}
\end{cor}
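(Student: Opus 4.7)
The plan is to reduce Corollary \ref{cor:ID_weighting} to Theorem \ref{thm:ID_main} by exploiting the decomposition guaranteed by CTP-B. Under Assumption \ref{assum:CTP-B}, the compensator takes the absolutely continuous form $A^{W^j}_t = \int_0^t a^{W^j}_s \dx{s}$, so the canonical decomposition reads $\dx{W^j_s} = \dx{M^{W^j}_s} + a^{W^j}_s \dx{s}$. Substituting into Equation \eqref{eq:cor_ee} and using the assumed nonvanishing of $a^{W^j}$, the weighting integral splits as
\begin{equation*}
\int_0^t \frac{H^i_s(\gamma_0) - {}^{p}H^i_s(\gamma_0)}{a^{W^j}_s}\dx{W^j_s} = \int_0^t \frac{H^i_s(\gamma_0) - {}^{p}H^i_s(\gamma_0)}{a^{W^j}_s}\dx{M^{W^j}_s} + \int_0^t \bigl(H^i_s(\gamma_0) - {}^{p}H^i_s(\gamma_0)\bigr)\dx{s}.
\end{equation*}

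The first term is handled exactly as in Theorem \ref{thm:ID_main}. Since ${}^{p}H^i$ is $\FF^{C}$-predictable and $\FF^{C} \subseteq \FF \subseteq \FG$, while $H^i(\gamma_0)$ is $\FG$-predictable by Step (1) of that proof, the integrand is $\FG$-predictable, and $\frac{1}{a^{W^j}_s}$ is $\FF$-predictable by assumption (a). Because $M^{W^j}$ is a nontrivial $\FG$-local martingale by Assumption \ref{assum:NID}, condition (b) and \Ito isometry imply that this stochastic integral is an $L^2$-bounded $\FG$-martingale starting at zero, so its expectation vanishes for every $t \in [0,T]$.

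The second term is killed by the defining property of the $\FF^{C}$-predictable projection. For any fixed $t \in [0,T]$, the process $s \mapsto \indicator{s \le t}$ is bounded and $\FF^{C}$-predictable, so applying the projection identity $\E[\int K\, H \dx{A}] = \E[\int K\, {}^{p}H \dx{A}]$ with $K_s = \indicator{s \le t}$ and the deterministic finite variation process $A_s = s$ yields
\begin{equation*}
\E\left[\int_0^t H^i_s(\gamma_0) \dx{s}\right] = \E\left[\int_0^t {}^{p}H^i_s(\gamma_0) \dx{s}\right],
\end{equation*}
so the compensator piece has zero expectation. Local integrability (condition (c)) legitimizes the splitting of the $W^j$-integral into its $M^{W^j}$ and absolutely continuous parts and justifies the Fubini step. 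Combining the two pieces gives $\bmu(t;\gamma_0) = \mathbf{0}^{d\times q}$ for every $t \in [0,T]$; the quadratic form assertion about $g$ then follows verbatim from the corresponding step in Theorem \ref{thm:ID_main}.

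The main technical obstacle is threading the interplay between the three filtrations $\FF^{C} \subseteq \FF \subseteq \FG$: the full integrand must be $\FG$-predictable for the martingale part to have mean zero, yet the subtracted projection ${}^{p}H^i$ must live in $\FF^{C}$ for the compensator piece to collapse via the projection identity. Since $H^i(\gamma_0)$ depends on the counterfactual $Y_T^{\bbzero}$, one must be careful that the $\FF^{C}$-predictable projection is well-defined on a strictly smaller filtration than the one to which $H^i$ is adapted; this is where condition (c), which forces the weighted integrand to produce a locally integrable process, does the bookkeeping that makes both the projection and the integration by parts rigorous.
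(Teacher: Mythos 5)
Your proof is correct and takes essentially the same route as the paper's: split $\dx{W^j_s}$ into its local-martingale part and its absolutely continuous compensator part via Assumption \textup{\ref{assum:CTP-B}} (using condition (c) to justify the splitting of the expectation), kill the martingale integral by the Theorem \textup{\ref{thm:ID_main}} argument ($\FG$-predictability of the weighted integrand plus the \Ito isometry under NID), and kill the drift term via the defining property of the $\FF^{C}$-predictable projection. The only cosmetic difference is that you invoke the projection identity against the kernel $\indicator{s \le t}\dx{s}$ directly, where the paper uses Fubini together with the pointwise identity $\E\left[{}^{p}H^i_s(\gamma_0)\right] = \E\left[H^i_s(\gamma_0)\right]$ at deterministic (hence predictable) times; these are the same fact.
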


\begin{remark}
    In the simplest case, by setting $c \equiv 0$, ${}^{p}H^i_s(\gamma) = \E[H^i_s(\gamma)]$. More complex $c$'s may increase efficiency in estimations.
\end{remark}
Intuitively, $\E\left[\int_0^t \left({H^i_s(\gamma) - {}^{p}H^i_s(\gamma)}\right)\dx{W^j_s}\right]$ may not be zero due to the correlation between the treatment increments $\dx{W^j_s}$ and the centered potential outcomes ${H^i_s(\gamma) - {}^{p}H^i_s(\gamma)}$. This correlation is removed by reweighting with the ``propensity of treatment" $1/{a^{W^j}_s}$. See Appendix A for the formal proof.

\section{Simulation study}
\label{sec:simu}
%\commentjs{Look at Liangyuan or Shu Yang's simulation code.}

To provide a concrete application of the main identification strategy in Theorem \ref{thm:ID_main}, we demonstrate how to estimate counterfactual means from data generated by the DGP in Example 2. The treatment plan of interest is $w_s \equiv 1, \forall s$, denoted as $\bbone$, and the estimand is $\E[Y_T^{\bbone}]$. Note that we focus on the identification in this paper, and thus implement a working estimation procedure below. We will study the theoretical properties of the estimators in future work.

\begin{figure}
    \centering
    \includegraphics[width = 1\textwidth]{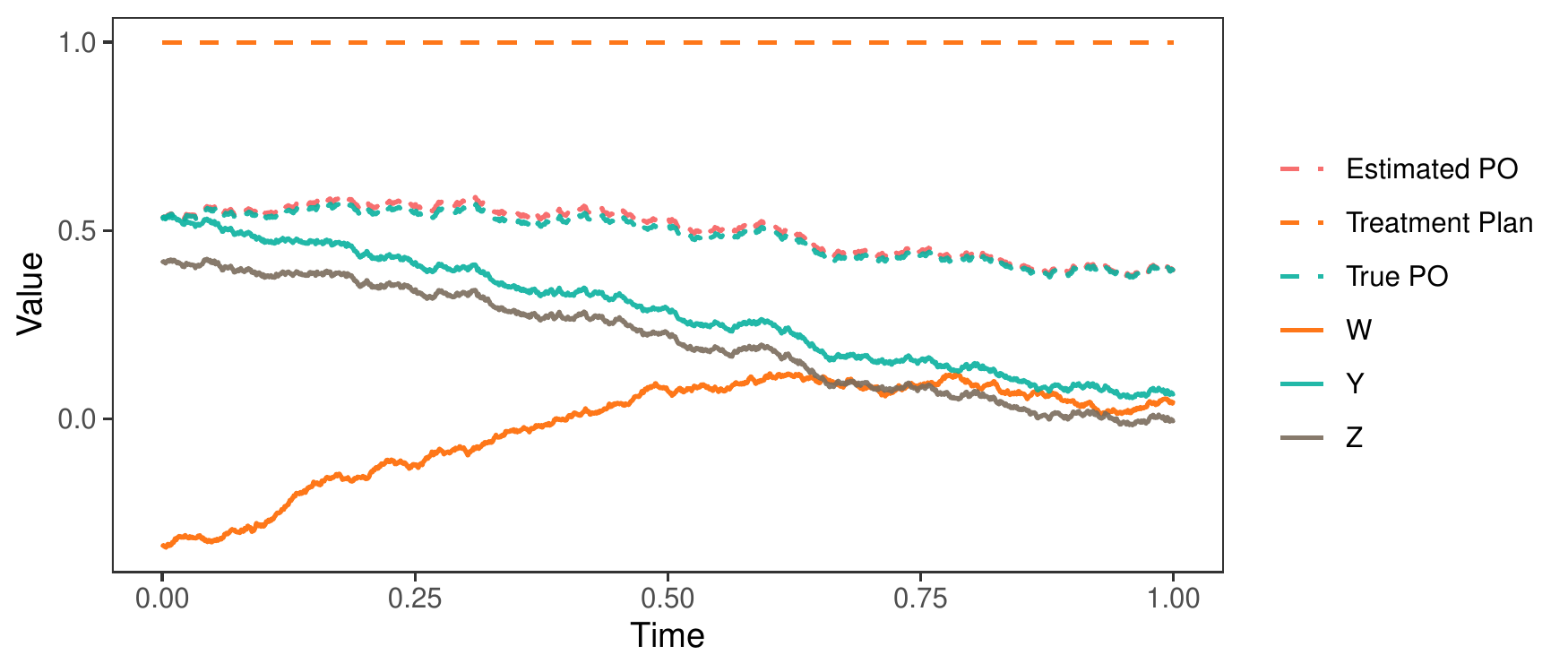}
    \caption{Trajectories of the outcome process $Y$, the treatment process $W$, the covariate process $Z$, the treatment plan $\rw \equiv \bbone$, the true potential outcome (PO) process $Y^{\bbone}$, and the estimated PO process $\hat{Y}^{\bbone}$. }
    \label{fig:simu}
\end{figure}

We generate $n = 2000$ full trajectories\footnote{Numerically, we use the Euler scheme \citep{iacus2009simulation} to simulate trajectories}% with 2000 equidistant steps.} % for OU, Euler and Milstein are both of strong order 1.
according to Equation \eqref{eq:3dOU} on $\mathcal{T} = [0, 1]$ with 
\[\boldsymbol{\beta}^* =\begin{pmatrix}
    &1, &-0.5 , &0.2 \\
    &-0.7 , &1, &-0.6 \\
    &0.3 , &0.2 , &1
\end{pmatrix}, \boldsymbol{\sigma}^*= \begin{pmatrix}
    &0.1, &0\\
    &0, &0.1 \\
    &0.1, &0
\end{pmatrix},\begin{pmatrix} Y_0\\W_0\\Z_0 \end{pmatrix} \sim \text{MVN}(\mathbf{0}, I), \]
where MVN is the multivariate normal distribution. A realization is shown in Figure \ref{fig:simu}.
By Equation \eqref{eq:eg2_csm}, we correctly specify the individual treatment effect $\tau_T(\rw)$ as 
\[\tau_T(\rw;\boldsymbol{\gamma}) = \int_0^T \gamma_1 w_t \left\{ \gamma_2 e^{\gamma_3(t-T)} + (1 - \gamma_2)e^{\gamma_4(t-T)} \right\}\dx{t},\]
where the causal parameter $\boldsymbol{\gamma}$ is a function of $\boldsymbol{\beta}$, and the true value $\boldsymbol{\gamma}^* = (0.50,0.092,0.76,1.2)$. % - beta12, {lam1-beta11-beta32}/{lam1-lam2}, lambda_2, lambda_1. (0.500,0.092,0.755,1.245)
Since $\text{dim}(\Gamma) = 4$, we need to specify at least 4 estimating equations. Define 
\begin{equation*}
    \begin{aligned}
        &H^1_t(\boldsymbol{\gamma}) = Y_T - \tau_T(\rW_T;\boldsymbol{\gamma}),
        &H^2_t(\boldsymbol{\gamma}) = Z_{t-}\{Y_T - \tau_T(\rW_T;\boldsymbol{\gamma})\}\\
        &H^3_t(\boldsymbol{\gamma}) = W_{t-}\{Y_T - \tau_T(\rW_T;\boldsymbol{\gamma})\}, 
        &H^4_t(\boldsymbol{\gamma}) = Y_{t-}\{Y_T - \tau_T(\rW_T;\boldsymbol{\gamma})\}\\
        % h^1 &= t^i(Y_T - \tau_T(\rW_T;\boldsymbol{\gamma}))\\
    \end{aligned}
\end{equation*}
To apply Theorem \ref{thm:ID_main}, we need to explicitly compute $M^W$. Thus, we define the following correctly specified nuisance model
\[\dx{W_t} = -(\nu_{1}Y_t + \nu_{2}W_t + \nu_{3}Z_t)\dx{t} + \nu_{4} \dx{B}_{t}\]
where $\boldsymbol{\nu} = (\nu_{1}, \nu_{2}, \nu_{3}, \nu_{4})$ is the nuisance parameter whose true value is $\boldsymbol{\nu}^* = (-0.7, 1, -0.6, 0.1)$, and $B$ is a standard Brownian motion. Denote $M^W_s(\boldsymbol{\nu}) = W_s - W_0 + \int_0^s(\nu_{1}Y_t + \nu_{2}W_t + \nu_{3}Z_t)\dx{t}$, and let $0 = t_0 < \ldots < t_\ell = 1$ be a time grid on $\mathcal{T}$ with $\ell\ge 2$. Then $\boldsymbol{\nu}$ can be easily estimated by solving the following estimating equations:
\begin{equation}
\Prob_n[M^W_{t_i}(\boldsymbol{\nu})] = 0; \Prob_n[M^W_{t_i}(\boldsymbol{\nu})]^2 = \nu_4^2t_i; \Prob_n\left[M^W_{t_{i-1}}(\boldsymbol{\nu})\left(M^W_{t_{i}}(\boldsymbol{\nu})-M^W_{t_{i-1}}(\boldsymbol{\nu})\right)\right] = 0, 
\label{eq:eg2_ee_nu}
\end{equation}
$i \in [\ell] $, where $\Prob_n$ is the empirical measure.
Denote the estimate from Equation \eqref{eq:eg2_ee_nu} as $\hat{\boldsymbol{\nu}}$, then $M^{W}_t$ can be explicitly estimated as 
\[\hat{M}^{W}_t = W_t - W_0 + \int_0^t(\hat{\nu}_{1}Y_s + \hat{\nu}_{2}W_s + \hat{\nu}_{3}Z_s)\dx{s}.\]
By Equation \eqref{eq:extremum} and choosing $V$ to be the identity matrix, we minimize the sample criteria function
\begin{equation}
g_n(\boldsymbol{\gamma}) = \sum_{i = 1}^4\left[\Prob_n\left( \int_0^T H_s^i(\boldsymbol{\gamma})\dx{\hat{M}^{W}_s} \right)\right]^2,
\label{eq:eg2_ee_gam}
\end{equation}
and define our estimator $\hat{\boldsymbol{\gamma}} = \arg\min_{\boldsymbol{\gamma}\in\Gamma}g_n(\boldsymbol{\gamma})$. 

In fact, we can jointly solve Equation \ref{eq:eg2_ee_nu} and \ref{eq:eg2_ee_gam}, and in our simulation, $\hat{\boldsymbol{\gamma}} = (0.59, 0.074, 0.75, 1.3)$,  % 0.5896779  0.0738933  0.7525434  1.3183596 % old one 0.56356546  0.09505339  0.75223173  1.31657140
and $\hat{\boldsymbol{\nu}} = (-0.65, 0.96, -0.51, 0.10)$. % -0.6516438  0.9621812 -0.5147099  0.1023171 % old one -0.65320912  0.96610547 -0.51332554  0.10244000
Then an estimate of $\E[Y_T^{\bbone}]$ is 
\[\hat{\E}[Y_T^{\bbone}] = \Prob_n\left[Y_{T} - \tau_T(\rW_{T}; \hat{\boldsymbol{\gamma}}) + \tau_T(\bbone; \hat{\boldsymbol{\gamma}})\right] = 0.329,\]
while the true value is 0.292. In Figure \ref{fig:simu}, we visualize the observable outcome $Y_t$, counterfactual outcome $Y^{\bbone}_t$, and the estimated counterfactual outcome $\hat{Y}^{\bbone}_t$.

\section{Discussion}

In this paper, we propose novel causal identification strategies for general continuous-time \cadlag processes under time-varying treatments and treatment-confounder feedbacks, which has been inspected from two perspectives: \emph{orthogonalization} and \emph{weighting}. We focus on the special semimartingale treatments, which include the rarely studied but widely seen case of continuously valued treatments. 

The proposed strategies rely on an ``unconfoundedness''-type assumption, the \emph{No Information Drift} assumption (Assumption \ref{assum:NID}). This key assumption indicates an \emph{invariance under a change of filtration} (the factual filtration $\FF$ versus the counterfactual filtration $\FG$). It has an intuitive interpretation that the treatment decision maker does not possess information about future potential outcomes that is unmeasured. This condition is usually unverifiable in observational studies. Thus, its validity should be carefully deduced and discussed based on domain knowledge, and the sensitivity of the results should be checked.

Similar to the discrete-time Marginal Structural Models with IPTW, the proposed strategies need two models. First, a causal structural model (CSM) linking potential outcomes to deterministic treatment plans (Assumption \ref{assum:CSM}) which specifies individual treatment effects. 
Second, a model of the observable treatment process based on the observable history $\FF$. Note that the $\FF$-model of $W$ is a nuisance model and does not need to be a causal one, and that no models for the covariate processes $Z$ are needed in this identification framework.

For different choices of $H$ or $C$, the strategies in Theorem \ref{thm:ID_main} and Corollary \ref{cor:ID_weighting} may point or partially identify the true causal parameter $\gamma_0$. For the subsequent estimation, statistical inference methodology is readily available in this case as a result of the recently fast-growing literature in econometrics for inference on possibly partially identified models through moment conditions. See \citet{molinari2020microeconometrics, canay2017practical, bontemps2017set} for recent advances. Therefore, we avoid making unnecessary assumptions to ensure point-identification for inference as in most traditional M-/Z-estimation literature, e.g. \citet[Chapter.~5]{van2000asymptotic}.

Many open questions remain for future work: (1) The choice of $H$ and $C$ may impact the efficiency of the estimation. Thus, knowing how to choose an ``optimal'' estimating function is of practical value. (2) Dynamic treatment regimes are common and often of interest in practice. We plan to extend our framework to this setting. (3) Throughout we focus on the problem of causal identification with fully observed trajectories. Our ongoing work is dedicated to solving the more complex causal problem of continuous-time DGPs under discrete-time observation patterns, where the current work provides a theoretical basis.

\textbf{Acknowledgements}: We are grateful to 
Fan Li 
and 
P. M. Aronow 
for helpful comments and suggestions. This work was supported by NIH grant NICHD 1DP2HD091799-01.

\appendix

\section{Proofs}

\subsection{Proof of Proposition \ref{prop:eg1}}

\begin{proof}
    In this proof, we will consider a more general case for an arbitrary discrete-time treatment plan $\rw'_J$, thus including the special case of $\rw'_J = \bar{\boldsymbol{0}}'$. 
    %\footnote{By Proposition 1 in \citet{neufcourt2019expansion}, decompositions of a semimartingale with respect to a filtration ${\mathbb{H}}$ and its augmented filtration $\tilde{\mathbb{H}}$ are the same. Thus, we work with $\tilde{\mathbb{H}}$ or $\mathbb{H}$ whichever is more convenient.} 
    % see JS P15 P62. Adaptedness is automatic. Adaptedness implies semimartingale. locally integrable variation implies special semimar. Martingale, adaptedness, predictable, etc are equivalent between DT and CT stochastic basis.

    Proof of (i):\\
    Using the above notation, $M^W$ is an $\mathbb{F}$-local martingale, and we will show $M^W$ is also a $\mathbb{G}$-local martingale. 
    Define the discrete-time representation of decomposition of $W$ as $W'_t = W'_0 + M_t^{W'} + A_t^{W'}$. Then, it suffices to prove $M_t^{W'}$ is a $\mathbb{G}'$-local martingale.
    Since $M^{W'}$ is an $\mathbb{F'}$-local martingale, there exists a localizing sequence of $\mathbb{F'}$-stopping times $\tau_i\uparrow J, i = 1, 2\ldots$, taking integer values such that each stopped process $(M^{W'})^{\tau_i}$ is an $\mathbb{F'}$-martingale. % use the correspondence between CT and DT stochastic bases.

    To show that Assumption \ref{assum:DTSE} implies Assumption \ref{assum:NID}, it would suffice if we can show $\E[(M^{W'})^{\tau_i}_k|\fG'_{k-1}] = (M^{W'})^{\tau_i}_{k-1}, \forall i$. Since
    \begin{equation}
        \begin{aligned}
            \E[M^{W'}_k|\fG'_{k-1}] &= \E[{W'_k} - {W'_0} - A^{W'}_k|\fG'_{k-1}]\\
            & \text{ (since ${W'_0}$ is $\fG'_{k-1}$-measurable.)} \\
            &= \E[{W'_k}|\fG'_{k-1}] - \E[A^{W'}_k|\fG'_{k-1}] - {W'_0} \\
            &\text{ (by the definition of $\fG'_{k-1}$, and that $A^{W'}_k$ is $\FF'$-predictable thus $A^{W'}_k$ is $\fF'_{k-1}$-measurable. )}  \\
            &= \E[{W'_k}|\fF'_{k-1} \bigvee \sigma(\rY^{\prime\rw'}_J)] - \E[A^{W'}_k|\fF'_{k-1}]  - {W'_0} \\
            &\text{ (by Assumption \ref{assum:DTSE}, and tower property of conditional expectation.)}  \\
            &= \E[{W'_k}|\fF'_{k-1}] - \E[A^{W'}_k|\fF'_{k-1}]  - {W'_0} \\
            &= \E[{W'_k} - A^{W'}_k - {W'_0}|\fF'_{k-1}] \\
            &= \E[M^{W'}_k|\fF'_{k-1}]
            %&\text{ (since $M^{W'}$ is $\mathbb{F}$-local martingale. \comment{Local martingale not martingale!})} 
        \end{aligned}
        \label{eq:dtct1}
    \end{equation}
    %We have $\E[M^{W'}_k|\fG'_{k-1}] = \E[M^{W'}_k|\fF'_{k-1}]$. 
    Then for a stopping time $\tau$ in $(\tau_i)$, 
    \begin{equation*}
        \begin{aligned}
            \E[(M^{W'})^{\tau}_k|\fG'_{k-1}] &= \E[(M^{W'}_{{\tau}\wedge k})|\fG'_{k-1}] \\
            &= \E[M^{W'}_{\tau}|\fG'_{k-1}]\indicator{\tau \le k-1} + \E[M^{W'}_{k}|\fG'_{k-1}]\indicator{\tau \ge k} \\
            & \text{ (by Equation \eqref{eq:dtct1}.)} \\
            &= \E[M^{W'}_{\tau \wedge (k-1)}|\fG'_{k-1}]\indicator{\tau \le k-1} + \E[M^{W'}_{k}|\fF'_{k-1}]\indicator{\tau \ge k} \\
            & \text{ (since $\tau$ is an $\FF'$-stopping time, and $M^{W'}$ is $\FF$-optional,}\\
            & \text{ $M^{W'}_{\tau \wedge (k-1)}$ is $\fF'_{\tau \wedge (k-1)}$-measurable, and thus $\fF'_{k-1}$- and $\fG'_{k-1}$-measurable)} \\
            &= \E[M^{W'}_{\tau \wedge (k-1)}|\fF'_{k-1}]\indicator{\tau \le k-1} + \E[M^{W'}_{k}|\fF'_{k-1}]\indicator{\tau \ge k} \\
            &= \E[(M^{W'})^{\tau}_k|\fF'_{k-1}] \\
            & \text{ (since $M^{W'}$ is $\FF$-local martingale.)} \\
            &= (M^{W'})^{\tau}_{k-1}
        \end{aligned}
    \end{equation*}
    Thus, we have shown that $M^{W'}$ is also a $\mathbb{G}'$-local martingale. Hence, we have finished the proof.
    % (2) Assumption \ref{assum:NID} implies Assumption \ref{assum:DTSE}. 
    % We know that $M^W$ is an $\mathbb{F}$-martingale and a $\mathbb{G}$-martingale. Thus, reverse Equation \eqref{eq:dtct1}, we have $\E[W_k|\fF_{k-1} \bigvee \sigma(\rY^{\rw}_J)]  = \E[W_k|\fF_{k-1}]$.

    Proof of (ii):\\
    Since $M^W$ is both an $\FF$- and $\FG$-martingale, then under the discrete-time representation, $\E[M^{W'}_k|\fG'_{k-1}] = \E[M^{W'}_k|\fF'_{k-1}]$. By similar arguments in Equation \eqref{eq:dtct1}, we have 
    \begin{equation*}
        \begin{aligned}
            & \E[{W'_k}|\fF'_{k-1} \bigvee \sigma(\rY^{\prime\rw'}_J)] - \E[A^{W'}_k|\fF'_{k-1}]  - {W'_0}\\
            &= \E[M^{W'}_k|\fG'_{k-1}] \\
            &= \E[M^{W'}_k|\fF'_{k-1}] \\
            &= \E[{W'_k}|\fF'_{k-1}] - \E[A^{W'}_k|\fF'_{k-1}]  - {W'_0}.
        \end{aligned}
    \end{equation*}
    Thus, $ \E[{W'_k}|\fF'_{k-1}] = \E[{W'_k}|\fF'_{k-1} \bigvee \sigma(\rY^{\prime\rw'}_J)]$.
\end{proof}
% M is G-martingale will imply both CT and DT exchangeability.

\subsection{Proof of Proposition \ref{prop:eg2}}

\begin{proof}
    We prove for a general treatment plan $\rw(T)$.
    By the DGP, 
    \[\dx{W_t} = -(\beta_{21}Y_t + \beta_{22}W_t + \beta_{23}Z_t)\dx{t} + \sigma_{22} \dx{B^2}_{t} \]
    %\comment{use the inf generator of the diffusion process to prove the following decomposition. See wiki Ito diffusion. This is wrong. The results on the wiki page only shows that the latter part is a martingale wrt the natural filtration, but the rest need not be a pred FV process. Thus, this martingale is not the local marntingale part of the decomp. I need to change to the real loc mar part of OU in the following proof}
    % Although we use $H$ filtration, however, as long as the conditions for strong solution are met, the solution is unique given the augmented filtration of driving noises and initial r.v. This can be seen by comparing the unique solution for two filtrations.
    By the martingale property of an \Ito diffusion, the decomposition of $W_t$ under $\mathbb{F}$ has the martingale part $M^W_t = \sigma_{22} {B^2_t}$. Then, for all $s < t \le T,$
    \begin{equation*}
        \begin{aligned}
            \E[{B^2_t}|\fG_s] &= \E[{B^2_t}|\fF_s \bigvee \sigma(\rY_T^{\rw(T)})] \\
            & \text{(by the property of the strong solution of a diffusion, $\fF^{Y^{\rw}}_t \subseteq \mathcal{N} \bigvee \sigma(Y_0, Z_0)\bigvee \fF^{B^1}_t$}\\
            & \text{and $\fF_s \subseteq \mathcal{N} \bigvee \sigma(W_0, Y_0, Z_0)\bigvee \fF^{B^1, B^2}_t$. Then by the tower property,)} \\
            &= \E\left[\E\left[{B^2_t} \bigg|\mathcal{N} \bigvee \sigma(W_0, Z_0, Y_0) \bigvee \fF^{B^1}_T \bigvee  \fF^{B^2}_s \right] \bigg| \fF_s \bigvee \sigma(\rY_T^{\rw(T)})\right]\\
            & \text{(by the joint independence of $B^{2}$, $B^1$, and $W_0, Z_0, Y_0$.)}\\
            &= \E\left[\E\left[{B^2_t} \bigg|\fF^{B^2}_s \right] \bigg| \fF_s \bigvee \sigma(\rY_T^{\rw(T)})\right]\\
            & \text{(since $B^2$ is a Brownian motion with respect to its natural filtration.)}\\
            &= \E\left[B^2_{s}\bigg|\fF_s \bigvee \sigma(\rY_T^{\rw(T)})\right]\\
            & \text{(since $B^2$ is $\FF$-martingale, $B^2_{s} \in \fF_s$.)} \\
            &= B^2_s
        \end{aligned}
    \end{equation*}
    % $B^2$ is $\FF$-martingale since it equals W_t - compensator, which are both FF-adapted.
    Hence $M^W_t$ is also a $\mathbb{G}$-martingale, and we finished the proof.
\end{proof}
    
\subsection{Proof of Proposition \ref{prop:eg3}}

\begin{proof}
    (I) We first compute the $\FF$-decomposition of $W$. 
    
    Denote $\tilde{N}_t = N_t - t$. By the property of Lebesgue-Stieltjes integral,
    \begin{equation}
        W_t = \int_0^t U^1_{s-}(1 - W_{s-})\dx{s} + \int_0^t U^1_{s-}(1 - W_{s-})\dx{\tilde{N}_s}.
        \label{eq:eg3_W}
    \end{equation}
    Define $K_t \equiv \int_0^t U^1_{s-}(1 - W_{s-})\dx{\tilde{N}_s}$.\\ 
    (i) We prove that $K_t$ is an $\FF$-martingale. Since $U^1, W$ are \cadlag and $\FF$-adapted, and $\FF$ is a subfiltration of $\FH$, thus the integrand $U^1_{s-}(1 - W_{s-})$ is an $\FH$-predictable bounded process. Since $\tilde{N}_s$ is an $\FH$-martingale, and $\E\left[\int_0^T(U^1_{s-}(1 - W_{s-}))^2 \dx{[\tilde{N}, \tilde{N}]_s}\right] < \infty$, thus, the stochastic integral is an $\FH$-martingale. Additionally, since $K_t = W_t - \int_0^t U^1_{s-}(1 - W_{s-})\dx{s}$, and both terms on the right hand side are $\FF$-adapted, thus $K_t$ is $\FF$-adapted. Therefore, $K_t$ is an $\FF$-martingale. \\
    (ii) Since $\int_0^t U^1_{s-}(1 - W_{s-})\dx{s}$ is an $\FF$-adapted continuous increasing process, it is an $\FF$-predictable finite variation process.
    By (i) and (ii), Equation \eqref{eq:eg3_W} is the $\FF$-canonical decomposition of $W$.

    (II) Next, we show that $\FG$-canonical decomposition of $W$ is also Equation \eqref{eq:eg3_W}. This is equivalent to proving that $K_t$ is also a $\FG$-martingale. We know that $\fG_t = \fF_t \bigvee \sigma(\rY_T^{\bar{\boldsymbol{0}}}) = \fF_t \bigvee \sigma(\overbar{U}_T^2) = \fF_t \bigvee \sigma(\iota^2)$. Define $\mathbb{L} = (\mathscr{L}_t)$, with $\mathscr{L}_t = \fH_t \bigvee \sigma(\iota^2)$. Thus, $\fF_t \subseteq \fG_t \subseteq \mathscr{L}_t$.
    Since $U^1_{t-}(1 - W_{t-})$ is $\FF$-predictable and bounded, it is thus also $\mathbb{L}$-predictable and bounded. By definition, we have 
    \[\E[\tilde{N}_t| \mathscr{L}_s] = \E[\tilde{N}_t| \fH_s \bigvee \sigma(\iota^2)].\]
    Since $\iota^2$ is an $\nFF^B$-stopping time, it is $\fF^B_T\bigvee \mathcal{N}$-measurable. By the independence of $B$ and $N$, we have $\E[\tilde{N}_t| \fH_s \bigvee \sigma(\iota^2)] = \E[\tilde{N}_t| \fH_s] = \tilde{N}_s$. Thus, $\tilde{N}$ is an $\mathbb{L}$-martingale. Therefore, $K_t$ is an  $\mathbb{L}$-martingale. Since we know that $K_t$ is $\FG$-adapted, thus, $K_t$ is a $\FG$-martingale. Hence we finished the proof.
\end{proof}

\subsection{Proof of Corollary \ref{cor:ID_weighting}}

\begin{proof}
    In general, our proof is based on the canonical decomposition of the stochastic integral in Equation \eqref{eq:cor_ee}, where the local martingale part is a zero-mean martingale, and the expectation of the compensator is an integral of a constant zero process. 

    In particular, we prove that $\gamma_0$ is a solution to \[\E\left[\int_0^t \frac{1}{{a^{W^j}_s}}\left({H^i_s(\gamma) - {}^{p}H^i_s(\gamma)}\right)\dx{W^j_s}\right] = 0\] for all $i,j,t$.

    (1) Note that by Assumption \ref{assum:CTP-B}, $a^{W^j}$ is $\FF$-predictable and nonzero, thus 
    $\frac{1}{{a^{W^j}_s}}$ is well-defined, $\FF$-predictable, and thus identified by observable data. \\
    (2) By Assumption \ref{assum:setting}, $W_{t-}, X_{t-}$ are $\FF$-predictable, since $c$ is Borel-measurable, $C$ is $\FF$-predictable, and thus ${}^{p}H^i_s(\gamma)$ is $\FF$-predictable for any fixed $\gamma$.\\
    (3) Note that since $U_t^{i,j}$ is locally integrable, thus we have the decomposition
    \begin{equation*}
        \begin{aligned}
            &\E\left[\int_0^t \frac{1}{{a^{W^j}_s}}\left({H^i_s(\gamma_0) - {}^{p}H^i_s(\gamma_0)}\right)\dx{W^j_s} \right] \\
            &= \E\left[\int_0^t \frac{1}{{a^{W^j}_s}}\left({H^i_s(\gamma_0) - {}^{p}H^i_s(\gamma_0)}\right)\dx{A^{W^j}_s} \right] + \E\left[\int_0^t \frac{1}{{a^{W^j}_s}}\left({H^i_s(\gamma_0) - {}^{p}H^i_s(\gamma_0)}\right)\dx{M^{W^j}_s} \right]
        \end{aligned}
    \end{equation*}
    (4) Given Assumption \ref{assum:NID}, \ref{assum:CTC}, and \ref{assum:CSM} and condition (b), using a similar argument in Proof of Theorem \ref{thm:ID_main} and considering (1)(2), the second term on the right hand side is zero.\\
    (5) By Assumption \ref{assum:CTP-B}, we have
    \begin{equation*}
        \begin{aligned}
            &\E\left[\int_0^t \frac{1}{{a^{W^j}_s}}\left({H^i_s(\gamma_0) - {}^{p}H^i_s(\gamma_0)}\right)\dx{A^{W^j}_s} \right]\\
            &= \E\left[\int_0^t \frac{1}{{a^{W^j}_s}}\left({H^i_s(\gamma_0) - {}^{p}H^i_s(\gamma_0)}\right){a^{W^j}_s}\dx{s} \right]\\
            &= \int_0^t \E\left[{H^i_s(\gamma_0) - {}^{p}H^i_s(\gamma_0)}\right]\dx{s} \\
            &= 0
        \end{aligned}
    \end{equation*}
    Hence we have finished the proof.
\end{proof}

% \subsection{Proof of Theorem \ref{thm:ID_main}}

% \subsection{Proof of Theorem \ref{thm:ID_weighting}}

%\bibliographystyle{unsrtnat}
\bibliography{fda}

\end{document}